	\definecolor{mygreen}{rgb}{0,.4,0}
	\definecolor{myblue}{rgb}{0,0,.5}
	\definecolor{mymagenta}{cmyk}{0,.6,0,0}
	\definecolor{marcelo}{rgb}{.2,.6,.2}
	\definecolor{aaron}{rgb}{.8,.2,.2}
\date{27 March 2014}
\theoremstyle{plain}
\newtheorem{theorem}{Theorem}
\newtheorem{corollary}[theorem]{Corollary}
\newtheorem{lemma}[theorem]{Lemma}
\newtheorem{proposition}[theorem]{Proposition}
\theoremstyle{definition}
\newtheorem{example}[theorem]{Example} 
\theoremstyle{remark}
\newtheorem*{remark}{Remark} 
\newcommand{\demph}[1]{\textcolor{myblue}{\it #1}}
\def\qand{\quad\hbox{and}\quad}
\newcommand{\qqand}{\qquad\text{and}\qquad}
\def\id{\mathrm{id}}
\def\unit{\iota}
\def\counit{\varepsilon}
\def\field{\Bbbk}
\def\kk{\field}
\def\tr{\operatorname{trace}}
\renewcommand{\ker}{\mathrm{Ker}}
\newcommand{\im}{\mathrm{Im}}
\newcommand{\bC}{\mathbb{C}}
\newcommand{\gr}[1]{\mathop{\mathrm{gr}}#1} 
\newcommand{\End}[1]{\mathop{\mathrm{End}}#1} 
\newcommand{\Conv}[1]{\End(#1)} 
\newcommand{\tilH}{\widetilde H}
\newcommand{\frakS}{\mathfrak S}
\newcommand{\frakg}{\mathfrak g}
\newcommand{\contra}[1]{#1^{\vee}}
\newcommand{\calP}{\mathcal P} 
\newcommand{\calT}{\mathcal T}
\newcommand{\cT}{\contra{\calT}}
\newcommand{\calS}{\mathcal{S}}
\newcommand{\calU}{\mathcal{U}}
\newcommand{\Kc}{\mathcal{K}}
\newcommand{\cKc}{\contra{\Kc}}
\newcommand{\Map}{T}
\newcommand{\dMap}{\Map^\ast}
\newcommand{\bMap}{\Map^\uparrow}
\newcommand{\uMap}{\Map_\downarrow}
\newcommand{\apode}{\mathrm{S}}
\newcommand{\sym}{\textsl{Sym}}
\newcommand{\ssym}{\frakS\sym}
\newcommand{\qsym}{\textsl{QSym}}
\newcommand{\tH}{\mathbf{H}} 
\newcommand{\tP}{\mathbf{P}} 
\newcommand{\egfh}{\mathbf{h}} 
\newcommand{\egfp}{\mathbf{p}} 
\newcommand{\egfa}{\mathbf{a}}
\newcommand{\wE}{\mathbf{E}} 
\newcommand{\wL}{\mathbf{L}} 
\newcommand{\tPi}{\mathbf{\Pi}} 
\newcommand{\tSig}{\mathbf{\Sigma}} 
\newcommand{\Eul}[1]{\mathop{\mathsf{e}^{(#1)}}} 
\newcommand{\mul}[1]{\mathop{\mathsf{mul}}#1} 
\newcommand{\emul}[1]{\mathop{\mathsf{emul}}#1}
\newcommand{\omul}[1]{\mathop{\mathsf{omul}}#1}
\newcommand{\expmul}[1]{\mathop{\mathsf{xmul}}#1} 
\newcommand{\pal}[1]{\mathsf{pal}#1} 
\newcommand{\epal}[1]{\mathsf{epal}#1} 
\newcommand{\opal}[1]{\mathsf{opal}#1} 
\newcommand{\nopal}[1]{\mathsf{npal}#1} 
\newcommand{\inv}{\mathsf{inv}} 
\newcommand{\abs}[1]{\lvert#1\rvert} 
\newcommand{\length}[1]{\ell\left(#1\right)}
\newcommand{\rev}[1]{\widetilde{#1}}
\def\cmrg{{\textcolor{mymagenta}{|}}} 
\newcommand{\shuff}{\mathrel{\raise1pt\hbox{$\scriptscriptstyle\sqcup{\mskip-4mu}\sqcup$}}}
\author{Marcelo Aguiar}
\address[Aguiar]{
	Department of Mathematics\\
        Cornell University\\
        Ithaca, NY\, 14853 
        }
\email{maguiar@math.cornell.edu}
\urladdr{http://www.math.cornell.edu/{\small$\sim$}maguiar}
\author{Aaron Lauve}
\address[Lauve]{
	Department of Mathematics and Statistics \\
	Loyola University Chicago \\
	Chicago, IL\, 60660 
        }
\email{lauve@math.luc.edu}
\urladdr{http://www.math.luc.edu/{\small$\sim$}lauve}
\thanks{Aguiar supported in part by NSF grant DMS-1001935.
Lauve supported in part by NSA grant \#H98230-12-1-0286.
This work was started at Texas A\&M University and both authors are
thankful to this institution.}
\title[{A}dams operators on graded connected {H}opf algebras]{The characteristic polynomial of the {A}dams operators
on graded connected {H}opf algebras}
\keywords{Adams operator, characteristic operation, convolution power, Hopf power, antipode, trace, graded connected Hopf algebra, Hopf monoid in species, $q$-Hopf algebra, Schur indicator, Eulerian idempotent.} 
\subjclass[2010]{16T05; 16T30}
\begin{document}

\begin{abstract}
The Adams operators $\Psi_n$ on a Hopf algebra $H$ are the convolution powers of the identity of $H$. They are also called Hopf powers or Sweedler powers. We study the Adams operators when $H$ is graded connected. 
The main result is a complete description of the characteristic polynomial---both eigenvalues and their multiplicities---for the action of the operator $\Psi_n$ on each homogeneous component of $H$. The eigenvalues are powers of $n$.
The multiplicities are independent of $n$, and in fact only depend on the dimension sequence of $H$. These results apply in particular to the antipode of $H$,
as the case $n=-1$. We obtain closed forms for the generating function of the sequence
of traces of the Adams operators. In the case of the antipode, the generating function bears a particularly simple relationship to the one for the dimension sequence.
In case $H$ is cofree, we give an alternative description for the
characteristic polynomial and the trace of the antipode in terms of certain palindromic words. 
We discuss parallel results that hold for Hopf monoids in species and for $q$-Hopf algebras.
\end{abstract}

\maketitle

\vspace*{-5pt}
\begin{center}\it Dedicated to the memory of Jean-Louis Loday.\end{center}

\tableofcontents

\section*{Introduction}\label{s:intro}

Let $H$ be a Hopf algebra with antipode $\apode\colon H \to H$. If $H$ is commutative or cocommutative, then it is well-known that $\apode$ is an involution ($\apode^2=\id$). 
In particular, its eigenvalues are $\pm1$. Alternatively, if $H$ is finite-dimensional, then 
$\apode$ has finite even order (and its eigenvalues can be arbitrary even roots of unity). This paper studies  the behavior of the antipode when $H$ is graded connected. 
We prove in Corollary~\ref{c:main} that, in this case, the eigenvalues of $\apode$ are always $\pm1$, even though $\apode$ may have infinite order on any homogeneous component of $H$. 

It is both natural and convenient to consider a more general family of operators on $H$:
the convolution powers of the identity. These are the Adams operators $\Psi_n$,
and the antipode is $\Psi_{-1}$. Our main result, Theorem \ref{t:main}, provides a complete description of the characteristic polynomial for the operator $\Psi_n$ acting on the $m$-th homogeneous component of $H$. For each scalar $n$ and nonnegative integer $m$,
the polynomial is uniquely
determined by the dimension sequence of $H$. The
eigenvalues are powers of $n$.
The multiplicities count samples with replacement by length and weight, with the samples taken from a weighted set that arises, numerically, as the inverse Euler transform of the
dimension sequence of $H$, and algebraically, as a basis of the graded Lie algebra
of primitive elements of a Hopf algebra canonically associated to $H$.

Corollaries~\ref{c:trace-power} and~\ref{c:trace-antipode-gen} provide
information on the trace of the antipode. The former provides a semicombinatorial
description for its values, and the latter the following remarkable expression for
the generating function:
\[
\sum_{m\geq 0} \tr\bigl(\apode\big\vert_{H_m}\bigr)\, t^m = \frac{h(t^2)}{h(t)},
\]
where $h(t)$ is the generating function for the dimension sequence of $H$.
In Corollary~\ref{c:asym} we derive information about the asymptotic behavior of the
sequence of traces.

To put these results into perspective, consider the Hopf algebra of symmetric functions. Calculating the trace of the antipode on suitable linear bases
yields a simple proof of the following interesting identity 
(see Section~\ref{ss:sym}):
\[
c(m) = e(m) - o(m),
\]
where $c(m)$ denotes the number of self-conjugate partitions of $m$, and $e(m)$ and $o(m)$ denote the number of partitions of $m$ with an even number
of even parts, and with an odd number of even parts, respectively.
The generating function for the trace of the antipode in the preceding paragraph
may be seen as an extension of this result to arbitrary graded connected Hopf algebras.

Consider now the Hopf algebra of quasisymmetric functions. 
Another quick calculation reveals that the trace of the antipode on the $m$-th component is a signed count of palindromic compositions of $m$; see Section~\ref{ss:qsym}. 
Corollary~\ref{c:pal} provides a similar result that applies to any graded connected
Hopf algebra that, as a graded coalgebra, is cofree.

Some of the main results admit extensions to certain graded connected $q$-Hopf algebras. Among these, we highlight Corollary~\ref{c:pal-q}, which describes the 
trace of the antipode as a polynomial in $q$,
and Corollary~\ref{c:genfun-trace-q}, which involves
appropriate $q$-generating functions and replaces
the above relationship between the sequence of antipode traces and the dimension sequence.

The theory of Hopf monoids in species often runs in parallel to the theory of
graded connected Hopf algebras. The main results of the paper admit variants
that apply in this context. The sequence of antipode traces is now determined 
from the dimension sequence as follows (Corollary~\ref{c:trace-antipode-gen-sp}): 
the exponential generating function of the former
is the reciprocal of that of the latter.

The paper is organized as follows. In Section \ref{s:prelims}, we discuss the 
necessary preliminaries from Hopf algebra theory.
The proof of Theorem \ref{t:main} is carried out in Section \ref{s:main}. 
Section~\ref{s:trace} focuses on the trace of the Adams operators and the antipode
particularly.
In Section \ref{s:cofree}, we give alternatives to our main results about the antipode that hold in the presence of cofreeness assumptions. Section \ref{s:apps} provides illustrations of the results and some simple calculations. In Appendix~\ref{s:species} we present the results for
 Hopf monoids in species, and in
 Appendix \ref{s:q-Hopf} we treat the case of $q$-Hopf algebras. 

The present paper supersedes and considerably expands on the results of
our extended abstract~\cite{AguLau:2013}.

\section{Preliminaries on Hopf algebras}\label{s:prelims}

Throughout, all vector spaces are over a field $\kk$ of characteristic zero. 

The structure maps of a bialgebra $H$ are denoted by
\begin{gather*}
\mu\colon H\otimes H \to H, \quad  \Delta\colon H \to H\otimes H,\\
\iota\colon \kk \to H, \quad \varepsilon\colon H \to \kk.
\end{gather*}
The \emph{antipode} of a Hopf algebra $H$ is denoted by 
\[
\apode \colon H \to H.
\]

\subsection{Convolution}\label{ss:conv}

Let $H$ be a bialgebra and 
let $\Conv{H}$ denote the space of linear maps $\Map:H\to H$.
The \demph{convolution product} of $P,Q\in\Conv{H}$ is
\[
P\ast Q := \mu\circ(P\otimes Q)\circ \Delta.
\] 
This turns the space $\Conv{H}$ into an associative algebra. The unit element is
\[
\iota\circ\varepsilon.
\]
The bialgebra $H$ is a \demph{Hopf algebra} if and only if the identity of $H$ is convolution-invertible.
In this case, the antipode $\apode$ is the convolution-inverse of the identity map:
\[
\apode\ast\id = \id\ast \apode= \iota\circ\varepsilon.
\] 

Let $H$ be a bialgebra. 
Put $\Delta^{(0)} := \id$, $\Delta^{(1)} := \Delta$, and $\Delta^{(n)} := (\Delta\otimes \id^{\otimes (n-1)})\circ\Delta^{(n-1)}$ for all $n\geq 2$. The superscript is one less than the number of tensor factors in the codomain. Similarly, $\mu^{(n)}$ denotes the map that multiplies $n+1$
elements of $H$, with $\mu^{(0)}:=\id$.  
The convolution powers of any $\Map\in\Conv{H}$ can be written as follows:
\[
	\Map^{\ast0}=\iota\circ\varepsilon \qand \Map^{\ast n} = \mu^{(n-1)} \circ \Map^{\otimes n} \circ \Delta^{(n-1)} \ \ 
\hbox{(for $n\geq1$)}.
\]

\subsection{Adams operators}\label{ss:Adams}

Let $H$ be a Hopf algebra. The convolution powers $\id^{\ast n}$ of the identity of $H$ are defined for any integer $n$. They are called \demph{Adams operators} and are denoted by
\begin{equation}\label{eq:adams}
\Psi_n := \id^{\ast n}:H\to H.
\end{equation}
For $n\geq 1$, we have
\begin{equation}\label{eq:adams2}
\Psi_n = \mu^{(n-1)}\circ\Delta^{(n-1)}.
\end{equation}
Note that $\Psi_0=\iota\circ\varepsilon$ and $\Psi_{-1}=\apode$. Also,
\begin{equation}\label{eq:adams3}
\Psi_{-n} = \apode^{\ast n}
\end{equation}
for all $n$.

This terminology is used in~\cite[\S 3.8]{Car:2007} and~\cite[\S 4.5]{Lod:1992}.
Other common terminology for these operators are \emph{Hopf powers}~\cite{NgSch:2008}, \emph{Sweedler powers}~\cite{KMN:2012,KSZ:2006}, and
\emph{characteristic operations}~\cite{GerSch:1991,Pat:1993}. The paper~\cite[\S 13]{AguMah:2013} studies analogous operators in the context of Hopf monoids in species.

The main goal of this paper is to analyze the characteristic polynomial of these operators
when the Hopf algebra $H$ is graded connected.

\subsection{Coradical filtration and primitive elements}\label{ss:corad}

For more details on the notions reviewed in this section, see~\cite[Ch. 5]{Mon:1993} or~\cite[Ch. 4]{Rad:2012}.

Let $H^{(0)}$ denote the \demph{coradical} of a bialgebra $H$, and let 
\[
H^{(0)} \subseteq H^{(1)} \subseteq H^{(2)}\subseteq \dotsb \subseteq H
\]
denote its coradical filtration. We have
\[
H= \bigcup_{m\geq 0} H^{(m)}.
\]

We say $H$ is \demph{connected} if $H^{(0)}$ is spanned by the unit element $1\in H$.
In this case, $H$ is a Hopf algebra; see Section~\ref{ss:antipode}.
In addition,
$H^{(1)}=H^{(0)} \oplus \mathcal \calP(H)$, where 
\[
\calP(H) := \left\{x\in H \mid \Delta(x) = 1\otimes x + x\otimes 1\right\}
\]
is the space of \demph{primitive elements} of $H$. More generally, setting 
$H_+:=\ker(\epsilon)$ and defining $\Delta_+: H_+\to H_+\otimes H_+$ by
 \[
\Delta_+(x) := \Delta(x)-1\otimes x -x\otimes 1,
\]
we have $H^{(m)}=H^{(0)} \oplus \ker(\Delta_+^{(m)})$, where the iterates of $\Delta_+$ are defined as for $\Delta$.

Let $H$ be a bialgebra.
Let $\gr H$ denote the graded vector space associated to the coradical filtration of $H$:
\begin{gather}
	\gr H :=  H^{(0)} \oplus \left( H^{(1)} \big/ H^{(0)} \right) 
		\oplus \left( H^{(2)} \big/ H^{(1)} \right) \oplus \left( H^{(3)} \big/ H^{(2)} \right) \oplus \dotsb .
\end{gather} 

A filtration-preserving map $f:H\to K$ induces a map
\[
\gr f: \gr H\to \gr K.
\]
The structure maps of $H$ are filtration-preserving (when $H\otimes H$ is endowed with the
tensor product of the coradical filtrations of each factor). The induced maps turn
$\gr H$ into a bialgebra. 
If $H$ is a Hopf algebra, so is $\gr H$.
If $H$ is connected, so is $\gr H$. More importantly:

\begin{lemma}\label{l:foissy}
If the bialgebra $H$ is connected, then the bialgebra $\gr H$ is commutative.
\end{lemma}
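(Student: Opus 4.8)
The plan is to translate commutativity of $\gr H$ into a single filtration estimate on $H$ itself: for $x\in H^{(i)}$ and $y\in H^{(j)}$, the commutator $[x,y]:=xy-yx$ should lie in $H^{(i+j-1)}$. Once this is known, the product induced on $\gr H$ satisfies $\bar x\,\bar y=\bar y\,\bar x$ on homogeneous classes, which is exactly commutativity. I would begin by recording the two standard features of the coradical filtration of a connected bialgebra that drive the argument: it is an \emph{algebra} filtration, $H^{(p)}H^{(q)}\subseteq H^{(p+q)}$ (valid here because the coradical $H^{(0)}=\kk\,1$ is a subalgebra), and it obeys the coalgebra estimate $\Delta_+\bigl(H^{(n)}\bigr)\subseteq\sum_{p+q=n,\;p,q\geq1}H^{(p)}\otimes H^{(q)}$, together with its partial converse: any $w\in H_+$ with $\Delta_+(w)\in\sum_{p+q=n}H^{(p)}\otimes H^{(q)}$ already lies in $H^{(n)}$. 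The converse follows by iterating $\Delta_+$ and observing that a tensor factor drawn from $H^{(p)}\cap H_+=\ker\bigl(\Delta_+^{(p)}\bigr)$ can be reduced only finitely often, forcing $\Delta_+^{(n)}(w)=0$.

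The heart of the matter is an induction on $i+j$. When $i=0$ or $j=0$ the relevant factor is a scalar, by connectedness, so $[x,y]=0$ and there is nothing to prove. For $i,j\geq1$ I would expand $\Delta_+([x,y])$ from $\Delta(x)=x\otimes1+1\otimes x+\Delta_+(x)$ and the analogous formula for $y$, writing $\Delta_+(x)=\sum x_1\otimes x_2$ and $\Delta_+(y)=\sum y_1\otimes y_2$. Two pleasant things happen: the linear cross terms $x\otimes y+y\otimes x$ produced by $\Delta_+(xy)$ and by $\Delta_+(yx)$ cancel, and every surviving term organizes into a commutator. Concretely, $\Delta_+([x,y])$ equals the four single commutators $\sum[x,y_1]\otimes y_2$, $\sum y_1\otimes[x,y_2]$, $\sum[x_1,y]\otimes x_2$, $\sum x_1\otimes[x_2,y]$, plus the quadratic remainder $\sum\bigl(x_1y_1\otimes x_2y_2-y_1x_1\otimes y_2x_2\bigr)$; one also notes $[x,y]\in H_+$ since $\varepsilon$ is multiplicative. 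Because each $x_a$ has filtration degree $\le i-1$ and each $y_b$ degree $\le j-1$, the four single-commutator terms are disposed of directly by the inductive hypothesis and a line of degree bookkeeping, each landing in $\sum_{p+q=i+j-1}H^{(p)}\otimes H^{(q)}$ with both factors of positive degree.

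The one genuine obstacle is the quadratic remainder: read off naively, $x_1y_1\otimes x_2y_2$ sits in filtration degree $i+j$, one unit too high. The resolution is that this remainder is itself assembled from commutators of strictly lower-degree elements, so I would rewrite it as $\sum[x_1,y_1]\otimes x_2y_2+\sum y_1x_1\otimes[x_2,y_2]$ and invoke the inductive hypothesis on $[x_1,y_1]$ and $[x_2,y_2]$; the unit gained in each filtration degree pulls both summands down into degree $i+j-1$. Combining the pieces gives $\Delta_+([x,y])\in\sum_{p+q=i+j-1}H^{(p)}\otimes H^{(q)}$, and the membership test then forces $[x,y]\in H^{(i+j-1)}$. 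This closes the induction, so $\gr H$ is commutative. The only delicate part is the bookkeeping that keeps every tensor factor at filtration degree $\geq1$, so that both the coalgebra estimate and the inductive hypothesis remain applicable throughout.
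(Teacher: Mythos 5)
Your proof is correct. The paper does not prove this lemma itself --- it only cites Sweedler's Theorem 11.2.5.a and the direct proof in Aguiar--Sottile \cite[Prop.~1.6]{AguSot:2005a} --- and your argument is essentially that direct proof: reduce commutativity of $\gr H$ to the filtration estimate $[H^{(i)},H^{(j)}]\subseteq H^{(i+j-1)}$, and establish it by induction on $i+j$ using the coalgebra estimate for $\Delta_+$, the fact that the coradical filtration is an algebra filtration (valid since $H^{(0)}=\kk 1$), and the key rewriting of the quadratic remainder $x_1y_1\otimes x_2y_2-y_1x_1\otimes y_2x_2$ as $[x_1,y_1]\otimes x_2y_2+y_1x_1\otimes[x_2,y_2]$ so that the inductive hypothesis drops the degree by one. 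All the steps check out, including the membership criterion $\Delta_+(w)\in\sum_{p+q=n}H^{(p)}\otimes H^{(q)}\Rightarrow w\in H^{(n)}$, which indeed follows from $H^{(n)}=H^{(0)}\oplus\ker\bigl(\Delta_+^{(n)}\bigr)$.
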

This is an immediate consequence of~\cite[Thm. 11.2.5.a]{Swe:1969}.
A direct proof is given in~\cite[Prop.~1.6]{AguSot:2005a}. 
We are thankful to Akira Masuoka, Susan Montgomery and the referee for pointing out the reference to Sweedler's text.

The passage $H\mapsto \gr H$ is functorial with respect to filtration-preserving maps.
It follows that convolution products are preserved:
if $f$ and $g:H\to H$ are linear maps, then
\begin{equation}\label{eq:gr-conv}
\gr(f\ast g) = (\gr f)\ast(\gr g).
\end{equation}

A morphism of bialgebras $f:H\to K$ preserves the coradical filtrations.
The induced map $\gr f:\gr H\to\gr K$ is a morphism of bialgebras.

\subsection{Antipode and {E}ulerian idempotents}\label{ss:antipode}

Any connected bialgebra is a Hopf algebra with antipode 
\begin{equation}\label{eq:s-def}
	\apode = \sum_{k\geq0} \bigl(\iota\circ\varepsilon-\id\bigr)^{\ast k}.
\end{equation}
This basic result can be traced back to Sweedler~\cite[Lem. 9.2.3]{Swe:1969}
and Takeuchi~\cite[Lem. 14]{Tak:1971}; see also~\cite[Lem. 5.2.10]{Mon:1993} and~\cite[Lem. 7.6.2]{Rad:2012}. It follows by expanding
\[
x^{-1} = \frac{1}{1-(1-x)} = \sum_{k\geq 0} (1-x)^k
\] 
in the convolution algebra, with $x=\id$ and $1=\iota\circ\varepsilon$.
 Connectedness guarantees that the sum in \eqref{eq:s-def} is finite when evaluated on any $h\in H$. More precisely, if $h\in H^{(m)}$, then
 $(\id-\iota\circ\varepsilon)^{\ast k}(h) = 0$ for all $k>m$.

Assume for the remainder of the section that $H$ is a connected Hopf algebra.

The binomial theorem yields the following expression for the $n$-th Adams operator,
for all integers $n$:
\begin{equation}\label{eq:adams-pol}
\Psi_n = \sum_{k\geq 0} \binom{n}{k}(\id-\iota\circ\varepsilon)^{\ast k}.
\end{equation}
Moreover, the right-hand side of~\eqref{eq:adams-pol} is well-defined for all scalar values of $n$. In this manner, the Adams operators $\Psi_n$ are defined for all scalars $n$.

Similarly, the following
series expansion defines an element $\log(\id)$ in the convolution algebra:
\begin{align}
\label{eq:log}	\log(\id) & := -\sum_{k\geq 1} \frac1k(\iota\circ\varepsilon-\id)^{\ast k}. 
\end{align}
Additionally, consider the elements $\Eul k$, for $k\geq0$, given by
\begin{gather}\label{eq:eul def}
	\Eul 0 := \unit\circ\counit, \quad \Eul 1 := \log(\id), \quad \Eul k := \frac1{k!}\bigl(\Eul1\bigr)^{\ast k} \ \ (\hbox{for }k>1).
\end{gather}

In case $H$ is commutative or cocommutative,  the  $\Eul k$ form a {complete orthogonal system} of idempotent operators on $H$. 
That is, 
\begin{gather}
	\id = \sum_{k\geq0}\Eul k, \quad
	\Eul k\circ \Eul k = \Eul k, \qand
	\Eul j\circ \Eul k = 0 \hbox{ \ for $j\neq k$.}
\end{gather} 
The $\Eul k$ are the \demph{higher Eulerian idempotents}; $\Eul{1}$ is the
 \demph{first} Eulerian idempotent.  
It follows from~\eqref{eq:adams},~\eqref{eq:eul def}, and the identity $x^{\ast n} = \exp(n\log(x))$ that
\begin{gather}\label{eq:id=eul}
	\Psi_n = \sum_{k\geq0} n^k \Eul k
\end{gather}
for all scalars $n$. In particular,
\[
\apode = \sum_{k\geq0} (-1)^k \Eul k.
\]

If $H$ is cocommutative, $\Eul k$ projects onto the subspace spanned by $k$-fold products of primitive elements of $H$. In particular,
$\Eul1$ projects onto $\calP(H)$. 

For proofs of these results, see \cite[Ch. 4]{Lod:1992}, \cite{Pat:1993} or~\cite[\S{9}]{Sch:1994}. Some instances of these operators in the recent literature include \cite{DPR:1}, \cite{NPT:2013}, and \cite{PatSch:2006}. 
For references to earlier work on Eulerian idempotents, see \cite[\S 14]{AguMah:2013}.

\subsection{Hopf-Lie theory}\label{ss:hopf-lie}

Let $H$ be a bialgebra. The space $\calP(H)$ is a Lie subalgebra of $H$ under the commutator bracket.
Write $\frakg$ for the Lie algebra $\calP(H)$. If $H$ is connected and cocommutative, the Cartier--Milnor--Moore (CMM) theorem yields a canonical isomorphism of Hopf algebras
\[
H\cong \calU(\frakg)
\]
between $H$ and the enveloping algebra of $\frakg$. 

Let $\calS(V)$ denote the symmetric algebra on a space $V$. It carries a unique Hopf algebra
structure for which the elements of $V$ are primitive.
The Poincar\'e--Birkhoff--Witt (PBW) theorem furnishes canonical isomorphisms
\[ 
\calU(\frakg) \cong \calS(\frakg) \qand \gr \calU(\frakg) \cong \calS(\frakg),
\]
where $\frakg$ is an arbitrary Lie algebra.
The former is an isomorphism of coalgebras, the latter of Hopf algebras. 
Here $\calS(\frakg)$ is the symmetric (Hopf) algebra on the vector space underlying $\frakg$.

Proofs of these classical results can be
found in~\cite[\S 3.8]{Car:2007},~\cite[\S~7]{MilMoo:1965} and \cite[App.~B]{Qui:1969}. 
For additional references, see~\cite[Thm. V.2.5]{Kas:1995},~\cite[\S 3.3.4 \& App. A]{Lod:1992}, and~\cite[Thm. 5.6.5]{Mon:1993}.

Lemma~\ref{l:foissy} provides a construction of a commutative connected Hopf algebra $\gr H$ from an arbitrary connected Hopf algebra $H$. This will enable us to employ CMM and PBW in a wider setting than that of
(co)commutative Hopf algebras.

\subsection{Graded bialgebras}\label{ss:graded}

The bialgebra $H$ is \demph{graded} if there is given a vector space decomposition
\[
H=\bigoplus_{m\geq0}H_m
\] 
such that
\begin{gather*}
\mu(H_p\otimes H_q) \subseteq H_{p+q} \text{ for all $p,q\geq 0$}, \quad \Delta(H_m) \subseteq \bigoplus_{p+q=m} H_p \otimes H_q \text{ for all $m\geq 0$,}\\ 
\im(\iota)\subseteq H_{0}, \qand H_m\subseteq \ker(\varepsilon) \text{ for all $m>0$}.
\end{gather*}
(The condition on the unit map $\iota$ simply states that $1\in H_{0}$.)
If $H$ is Hopf and the antipode satisfies
\[
\apode(H_m)\subseteq H_m
\]
for all $m\geq 0$, we say $H$ is a graded Hopf algebra.

If $H$ is an arbitrary bialgebra, then $\gr H$ is a graded bialgebra for which the component of 
degree $n$ is $H^{(n)} \big/ H^{(n-1)}$. 

Let $H$ be a graded bialgebra. The space $\calP(H)$ is then graded with
$\calP(H)_m=\calP(H)\cap H_m$. Moreover, $\calP(H)$ is then a graded Lie algebra.
Similarly, each subspace $H^{(n)}$ is graded with
$(H^{(n)})_m=H^{(n)}\cap H_m$. Hence,
$\gr H$ inherits a second grading for which 
\begin{equation}\label{eq:secgr}
(\gr H)_m := (H^{(0)})_m \oplus\Bigl( (H^{(1)})_m \big/ (H^{(0)})_m \Bigr) \oplus \Bigl((H^{(2)})_m \big/ (H^{(1)})_m\Bigr) \dotsb.
\end{equation}
Moreover,
$H^{(0)}\subseteq H_0$. 

We say that $H$ is \demph{graded connected} if $\dim H_0=1$. The preceding implies that
in this case $H_0=H^{(0)}$, and therefore $H$ is indeed connected. 

Assume that $H$ is a graded connected bialgebra. One may show by induction that $H_m\subseteq H^{(m)}$ for all $m$. It follows that the sum~\eqref{eq:secgr} stops at $(H^{(m)})_m \big/ (H^{(m-1)})_m$.
It also follows, from~\eqref{eq:s-def}, that $H$ is a graded Hopf algebra and
\begin{equation}\label{eq:s-def-graded}
	\apode\big\vert_{H_m} = \sum_{k=0}^m \bigl(\iota\circ\varepsilon-\id\bigr)^{\ast k}\big\vert_{H_m}.
\end{equation}
More generally, it follows from~\eqref{eq:adams-pol} that
\begin{equation}\label{eq:adams-pol-graded}
\Psi_n\big\vert_{H_m} = \sum_{k= 0}^m \binom{n}{k}(\id-\iota\circ\varepsilon)^{\ast k}\big\vert_{H_m}.
\end{equation}

\section{Characteristic polynomials of the Adams operators}\label{s:main}

This section contains the main result (Theorem~\ref{t:main}), which determines
the characteristic polynomials of the Adams operators on a graded connected Hopf algebra 
$H$. These only depend on the dimension sequence of $H$. A number of consequences
about the antipode are also presented. Some preliminaries on enumeration of multisets
are reviewed first.

\subsection{Sampling with replacement and the symmetric algebra}\label{ss:sampling}

Given a sequence $\underline{g}=(g_i)_{i\geq 1}$ and a partition $\lambda = 1^{k_1}2^{k_2}\dotsb r^{k_r}$, put
\begin{equation}\label{eq:binom-g}
  \binom{\underline{g}}{\lambda} := \binom{g_1+k_1-1}{k_1} \dotsb \binom{g_r+k_r-1}{k_r} .
\end{equation}
Let $\abs{\lambda}:=k_1+2k_2+\cdots+rk_r$ denote the size of $\lambda$, and
$\length{\lambda}:=k_1+k_2+\cdots+k_r$  denote the number of parts of $\lambda$. 
Given nonnegative integers $k$ and $m$, set
\begin{gather}\label{eq:eul}
  \mul(k,m) := \sum_{\substack{\abs{\lambda}=m \\ \length\lambda = k}} \binom{\underline{g}}{\lambda}.
\end{gather}
In particular,
\begin{equation}\label{eq:eul0}
\mul(0,m) = \delta(0,m) \qand \mul(k,m)=0 \text{ for all $k>m$.} 
\end{equation}
The numbers $\mul(k,m)$ depend on the given sequence, although this is not reflected in the notation.

If we sample with replacement from a set with $g_i$ elements of \emph{weight} $i$, then 
$\binom{\underline{g}}{\lambda}$ counts the number of samples with weight distribution $\lambda$; in other words, the number of multisets of cardinality
$\length{\lambda}$ and containing exactly $k_i$ elements of weight $i$ for each $i=1,\ldots,r$.
The numbers $\mul(k,m)$ then count the number of multisets of cardinality $k$ and total weight $m$.

Now let $W$ be a positively graded vector space, and let $g_i:=\dim W_i$ for each $i\geq 1$.
Let $W^k$ denote the $k$-th symmetric power of $W$. In other words, $W^k$ is the subspace of the symmetric algebra $\calS(W)$ spanned by $k$-fold products of elements of $W$.
It inherits a grading from $W$, where $(W^k)_m$ is spanned by products $w_1\cdots w_k$ with $\deg(w_1)+\cdots+\deg(w_k)=m$.

Fix a homogeneous basis of $W$, and let it be our sample set. The set of monomials of length $k$ (that is, $k$-fold products of basis elements of $W$) is then a basis of $W^k$.
A multiset of cardinality $k$ and total weight $m$ corresponds to a monomial of length $k$
and degree $m$.
Therefore, 
\[
 \dim (W^k)_m  = \mul(k,m).
\]
The bivariate generating series for $\calS(W)$ by length and degree is then
\begin{equation}\label{eq:sym-gen}
\sum_{k,m\geq 0} \mul(k,m)\, s^k t^m = \prod_{i\geq 1} (1-st^i)^{-g_i}.
\end{equation}
This follows by expanding the right-hand side with the aid of the binomial theorem and employing~\eqref{eq:binom-g} and~\eqref{eq:eul}.

The numbers $\mul(k,m)$ enter in Theorem~\ref{t:main} below.

\subsection{Characteristic polynomial}\label{ss:char}

We state some standard results from linear algebra.

\begin{lemma}\label{l:eigenvalues}
Let $V$ be a finite-dimensional vector space and $\Map\in\End(V)$ a linear transformation.	
\begin{enumerate}
\item[(i)] Let $U$ be a $\Map$-invariant subspace of $V$.
If $\bMap\in\End(V/U)$ denotes the linear transformation induced by $\Map$ on the quotient, and $\uMap\in\End(U)$ denotes the restriction of $\Map$ to $U$, then the characteristic polynomials of these three maps satisfy
\[
	\chi_\Map(x) = \chi_{\uMap}(x) \, \chi_{\bMap}(x).
\]
\item[(ii)] The characteristic polynomials of $\Map$ and of the dual map $\dMap\in\End(V^*)$ are equal. \qed
\end{enumerate}
\end{lemma}

We are now ready for our main result. 
Let $H$ be a graded connected Hopf algebra.
We assume from this point onwards that the homogeneous components $H_m$ of $H$ are finite-dimensional. 
We let $\tilH$ denote the \demph{graded dual} of $\gr H$ with respect to the grading~\eqref{eq:secgr}. It is a graded bialgebra with homogeneous components
\[
\tilH_m := \bigl((\gr H)_m\bigr)^*.
\]
If $H$ is graded connected, $\tilH$ is graded connected and cocommutative, by Lemma~\ref{l:foissy}. We let $\frakg:=\calP(\tilH)$ denote the graded Lie
algebra of primitive elements of $\tilH$. 
For each $i\geq 1$, let 
\[
g_i := \dim \frakg_i
\]
denote the dimension of the homogeneous component of $\frakg$ of degree $i$.
Consider the corresponding numbers $\mul(k,m)$, as in~\eqref{eq:eul}.

\begin{theorem}\label{t:main}
Let $H$ be as above.
For every scalar $n$ and nonnegative integer $m$, the characteristic polynomial of 
the restriction $\Psi_n\big\vert_{H_m}$ of the $n$-th Adams operator is
\begin{equation}\label{eq:char poly}
	\chi\bigl(\Psi_n\big\vert_{H_m}\bigr)(x) = \prod_{k=0}^m(x-n^k)^{\mul(k,m)}.
\end{equation} 
\end{theorem}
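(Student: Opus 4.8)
The plan is to reduce the computation for an arbitrary graded connected Hopf algebra $H$ to the case of a graded connected \emph{cocommutative} Hopf algebra, where the Eulerian idempotents supply an explicit eigenspace decomposition. There are three moves: pass from $H$ to its associated graded $\gr H$, dualize to obtain $\tilH$, and then diagonalize the Adams operator on $\tilH$. The point of the first two moves is that the characteristic polynomial is an invariant that survives them intact, while they replace $H$ by a Hopf algebra to which the Eulerian machinery of Section~\ref{ss:antipode} applies.

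First I would note that $\Psi_n$ preserves the coradical filtration: by~\eqref{eq:adams-pol-graded} it is a polynomial in the filtration-preserving maps $(\id-\iota\circ\varepsilon)^{\ast k}$. As $\Psi_n$ also respects the grading, its restriction to $H_m$ preserves the finite filtration $(H^{(0)})_m \subseteq (H^{(1)})_m \subseteq \dotsb \subseteq (H^{(m)})_m = H_m$. Applying Lemma~\ref{l:eigenvalues}(i) successively to these quotients shows that $\chi\bigl(\Psi_n\big\vert_{H_m}\bigr)(x)$ equals the characteristic polynomial of the operator induced on $(\gr H)_m$; by~\eqref{eq:gr-conv} and the functoriality of $\gr$, that induced operator is exactly the Adams operator of $\gr H$. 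Next, dualizing a Hopf algebra interchanges $\mu$ and $\Delta$ and so carries the Adams operator of $\gr H$ to that of $\tilH$; since characteristic polynomials are preserved under dualization by Lemma~\ref{l:eigenvalues}(ii), I would conclude that $\chi\bigl(\Psi_n\big\vert_{H_m}\bigr)(x)$ is the characteristic polynomial of the Adams operator of $\tilH$ acting on $\tilH_m$.

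It remains to compute this last polynomial. By Lemma~\ref{l:foissy} the bialgebra $\gr H$ is commutative, hence $\tilH$ is cocommutative; being graded connected and cocommutative, $\tilH$ admits the higher Eulerian idempotents $\Eul{k}$ as a complete orthogonal system, with $\Psi_n = \sum_{k\geq 0} n^k\Eul{k}$ by~\eqref{eq:id=eul}. Thus $\Psi_n$ acts as the scalar $n^k$ on each summand $\im(\Eul{k})$, giving
\[
\chi\bigl(\Psi_n\big\vert_{H_m}\bigr)(x) = \prod_{k=0}^m \bigl(x-n^k\bigr)^{\dim\,(\im\Eul{k})_m}.
\]
To identify the exponents with $\mul(k,m)$, I would use cocommutativity again: $\Eul{k}$ projects onto the span of $k$-fold products of primitive elements, which CMM and PBW identify with the $k$-th symmetric power of $\frakg=\calP(\tilH)$; the dimension count of Section~\ref{ss:sampling} with $g_i=\dim\frakg_i$ then yields $\dim\,(\im\Eul{k})_m = \mul(k,m)$. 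The main obstacle is conceptual rather than computational. Because $H$ is neither commutative nor cocommutative in general, the idempotents $\Eul{k}$ do not decompose $H$ itself, so the entire argument hinges on the first move: the commutativity of $\gr H$ furnished by Lemma~\ref{l:foissy}, combined with the invariance of the characteristic polynomial under passage to the associated graded and under dualization, is what transports the problem to a setting where the eigenvalues and their multiplicities can be read off.
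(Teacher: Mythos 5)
Your proposal is correct and follows essentially the same route as the paper: reduce to $\gr H$ via the coradical filtration and Lemma~\ref{l:eigenvalues}(i), dualize to the cocommutative $\tilH$ via Lemma~\ref{l:eigenvalues}(ii), and read off eigenvalues and multiplicities from the Eulerian decomposition $\Psi_n=\sum_k n^k\Eul{k}$ together with CMM/PBW. The only (immaterial) differences are that the paper first reduces to nonnegative integer $n$ by polynomiality and passes all the way to $\calS(\frakg)$ before invoking the idempotents, whereas you apply them on $\tilH$ and use PBW only for the dimension count.
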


Before the proof, a few remarks are in order. First, note that the factor indexed by $k=0$ is nontrivial only when $m=0$, according to~\eqref{eq:eul0}. Second, note that the exponents
$\mul(k,m)$ do not depend on $n$.
Additional information on the exponents $\mul(k,m)$ is provided in Proposition~\ref{p:main}. 

\begin{proof}
First of all, it suffices to establish~\eqref{eq:char poly} when $n$ is a nonnegative integer.
Indeed, both sides depend polynomially on $n$; the left-hand side in view of~\eqref{eq:adams-pol-graded}.

We argue that we may replace $H$ with $\gr H$. Indeed, since $\Psi_n$ preserves both the grading and the coradical filtration of $H$, 
it preserves the filtration
\[
 (H^{(0)})_m\subseteq (H^{(1)})_m \subseteq \cdots\subseteq (H^{(m)})_m = H_m
\]
 for each $m$. 
By repeated application of Lemma \ref{l:eigenvalues}(i) we deduce that
  \[
  \chi\bigl(\Psi_n\big\vert_{H_m}\bigr) 
  = \chi\bigl(\gr(\Psi_n)\big\vert_{(\gr H)_m}\bigr).
  \]
  In addition, by~\eqref{eq:gr-conv}, 
  \[
  \gr(\Psi_n) = \gr(\id^{\ast n})=(\gr \id)^{\ast n} = \id^{\ast n} = \Psi_n.
  \] 
  Therefore,
  \[
   \chi\bigl(\Psi_n\big\vert_{H_m}\bigr) 
  = \chi\bigl(\Psi_n\big\vert_{(\gr H)_m}\bigr).
  \]

Next, we may replace $\gr H$ with $\tilH$. Indeed, the map $\Map\mapsto\Map^*$ is an isomorphism of convolution algebras $\End(H)\cong\End(H^*)$ (where duals and endomorphisms are in the graded sense). Together with Lemma \ref{l:eigenvalues}(ii) this implies that
 \[
 \chi\bigl(\Psi_n\big\vert_{(\gr H)_m}\bigr) 
  = \chi\bigl(\Psi_n\big\vert_{\tilH_m}\bigr).
\]

Now let $\frakg = \calP(\tilH)$. By CMM, $\tilH \cong \calU(\frakg)$, and by PBW,
$\gr \calU(\frakg) \cong \calS(\frakg)$ as Hopf algebras.  The same argument as above shows that we may replace $\tilH$ with $\calS(\frakg)$. 

As $\calS(\frakg)$ is cocommutative, the Eulerian idempotents are available. 
From~\eqref{eq:id=eul} we have that
\[
\chi\bigl(\Psi_n\big\vert_{\calS(\frakg)_m}\bigr) = \prod_{k\geq0}  \chi\bigl(n^k\Eul k\big\vert_{\calS(\frakg)_m}\bigr).
\] 
It thus suffices to calculate the characteristic polynomial of the $\Eul k$ on $\calS(\frakg)$. 

Finally, the action of $\Eul k$ on $\calS(\frakg)$ is simply projection onto $\frakg^k$, the subspace spanned by $k$-fold products of elements of $\frakg$. 
It follows that
\[
{\chi\bigl(n^k \Eul k\big\vert_{\calS(\frakg)_m}\bigr)}(x) = (x-n^k)^{\mul(k,m)},
\]
where 
\[
\mul(k,m) = \dim\, (\frakg^k)_m.
\]
This completes the proof.
\end{proof}

\begin{remark}
One may easily see that the Adams operators act on $\calS(W)$ as follows
\[
	\Psi_n(w_1\dotsb w_k) = n^k w_1\dotsb w_k,
\]
where $w_i\in W$, $i=1,\ldots,k$.  The proof of Theorem~\ref{t:main} can then be
completed without explicit mention of the Eulerian idempotents. 

On the other hand, assume that $H$ is a graded connected Hopf algebra
that is either commutative or cocommutative.
The expression~\eqref{eq:id=eul} for the Adams operators in terms of the
Eulerian idempotents shows that the former are simultaneously diagonalizable.
The forthcoming thesis of Amy Pang~\cite{Pan:2014} contains a discussion
of a common eigenbasis for the Adams operators on such $H$.
\end{remark}

The exponents $\mul(k,m)$ are determined by the dimension sequence of $\frakg$, through \eqref{eq:binom-g} and~\eqref{eq:eul}. In turn, this sequence is related to the dimension
sequence of $H$ by
\begin{equation}\label{eq:alt eul}
	1+\sum_{m\geq1} h_m t^m = \prod_{i\geq1}\bigl(1-t^i\bigr)^{-g_i},
\end{equation}
where $h_m:=\dim H_m$. Indeed, the right-hand side is the generating series for 
$\calS(\frakg)$, and we have from the proof of Theorem~\ref{t:main}
that $H_m\cong \tilH_m \cong \calS(\frakg)_m$ as vector spaces.
It follows that the sequences $(g_i)$ and $(h_m)$ determine each other.
In particular:

\begin{proposition}\label{p:main}
The exponents $\mul(k,m)$ 
are determined by the dimension sequence of $H$. 
\end{proposition}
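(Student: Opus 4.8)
The plan is to show that the dimension sequence $(h_m)_{m\geq 1}$ of $H$ and the dimension sequence $(g_i)_{i\geq 1}$ of the graded Lie algebra $\frakg$ determine each other, since the exponents $\mul(k,m)$ are defined purely in terms of the $g_i$ through~\eqref{eq:binom-g} and~\eqref{eq:eul}. The key ingredient is the identity~\eqref{eq:alt eul}, which is already established in the paragraph preceding the proposition: the isomorphisms $H_m\cong\tilH_m\cong\calS(\frakg)_m$ from the proof of Theorem~\ref{t:main} give $h_m=\dim\calS(\frakg)_m$, and setting $s=1$ in the bivariate generating series~\eqref{eq:sym-gen} yields
\[
1+\sum_{m\geq1} h_m t^m = \prod_{i\geq1}\bigl(1-t^i\bigr)^{-g_i}.
\]
So the only thing that requires argument is that this relation is invertible, i.e.\ that the formal power series on the right-hand side determines the exponent sequence $(g_i)$.

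First I would argue the invertibility by induction on $i$. The product $\prod_{i\geq1}(1-t^i)^{-g_i}$ is a well-defined formal power series with constant term $1$, and I would recover the $g_i$ from its coefficients one degree at a time. The coefficient of $t^1$ on the right is exactly $g_1$, so $g_1=h_1$ is determined. Assuming $g_1,\dots,g_{i-1}$ have been determined by $h_1,\dots,h_{i-1}$, the coefficient of $t^i$ in the product is $g_i$ plus a polynomial expression in $g_1,\dots,g_{i-1}$ (arising from the lower-degree factors), and this polynomial is fixed by the induction hypothesis; hence $g_i$ is determined by $h_i$ together with the previously determined values. This is the standard fact that a sequence and its inverse Euler transform determine one another, and it is the engine behind the proposition.

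With the two sequences shown to determine each other, I would conclude as follows: given the dimension sequence $(h_m)$ of $H$, the sequence $(g_i)$ is uniquely determined, and then each $\mul(k,m)$ is computed from the $g_i$ by the explicit formulas~\eqref{eq:binom-g} and~\eqref{eq:eul}. Therefore the exponents $\mul(k,m)$ are functions of the dimension sequence of $H$ alone, which is the assertion.

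The main obstacle is essentially presentational rather than mathematical: almost all of the content is already assembled in the excerpt immediately before the statement, so the real work is simply to isolate and emphasize the invertibility of the Euler-transform relation~\eqref{eq:alt eul}. I expect the only genuinely careful point to be the inductive recovery of $(g_i)$ from $(h_m)$—in particular checking that at each stage the coefficient of $t^i$ depends on $g_i$ linearly with coefficient $1$, so that $g_i$ can indeed be solved for. Since everything lives in formal power series with constant term $1$, no convergence or finiteness issues arise beyond the standing assumption that each $H_m$ is finite-dimensional, which guarantees the $g_i$ are finite via~\eqref{eq:alt eul}.
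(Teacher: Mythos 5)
Your proof is correct and follows essentially the same route as the paper: both rest on the relation~\eqref{eq:alt eul} between $(h_m)$ and $(g_i)$ established just before the proposition, together with the fact that $\mul(k,m)$ is computed from $(g_i)$ via~\eqref{eq:binom-g} and~\eqref{eq:eul}. The only difference is that you spell out the degree-by-degree inversion of the Euler transform, which the paper simply asserts with ``the sequences $(g_i)$ and $(h_m)$ determine each other''; that added detail is harmless and accurate.
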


\begin{remark}
Equation~\eqref{eq:alt eul} says that the sequence $(h_m)_{m\geq 1}$ is the \demph{Euler transform} of $(-g_i)_{i\geq 1}$, in the sense of~\cite[p. 20]{SloPlo:1995}. 
The nonnegativity of the sequence $(g_i)$ restricts the class of sequences $(h_m)$ that may be realized as dimension sequences of graded connected Hopf algebras.
\end{remark}

\subsection{Eigenvalues of the antipode}\label{ss:eigen-antipode}

Let $H$ be a graded connected Hopf algebra and $\apode$ its antipode.

Applying Theorem~\ref{t:main} in the case $n=-1$ yields information about
the antipode, since $\apode=\Psi_{-1}$. We obtain
\begin{equation}\label{eq:char-apode}
	\chi\bigl(\apode\big\vert_{H_m}\bigr)(x) = (x-1)^{\emul(m)} (x+1)^{\omul(m)},
\end{equation} 
where 
\[
\emul(m) := \sum_{\text{$k$ even}} \mul(k,m)
\qand
\omul(m) := \sum_{\text{$k$ odd}} \mul(k,m).
\]
In particular:

\begin{corollary}\label{c:main}
The eigenvalues of the antipode are $\pm1$.
\end{corollary}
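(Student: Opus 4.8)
The goal is to show that the eigenvalues of the antipode $\apode = \Psi_{-1}$ on each homogeneous component $H_m$ are $\pm 1$. The plan is to read this off directly from the characteristic polynomial already computed in Theorem~\ref{t:main}. By that theorem applied with the scalar $n = -1$, the characteristic polynomial of $\apode\big\vert_{H_m}$ factors as $\prod_{k=0}^m (x - (-1)^k)^{\mul(k,m)}$. Since $(-1)^k$ equals $+1$ when $k$ is even and $-1$ when $k$ is odd, every linear factor is either $(x-1)$ or $(x+1)$, which is exactly the grouping recorded in equation~\eqref{eq:char-apode}.

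The key observation is simply that the roots of the characteristic polynomial are precisely the eigenvalues of the operator. First I would invoke Theorem~\ref{t:main} to obtain the factorization of $\chi\bigl(\apode\big\vert_{H_m}\bigr)(x)$ into factors of the form $(x-n^k)$ with $n = -1$. Next I would observe that the only values taken by $n^k = (-1)^k$ are $+1$ and $-1$. Consequently the only possible roots of the characteristic polynomial---and hence the only possible eigenvalues of $\apode\big\vert_{H_m}$---are $+1$ and $-1$. Ranging over all $m$ gives the statement for the antipode on all of $H$.

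There is essentially no obstacle here, as the corollary is an immediate specialization of Theorem~\ref{t:main}; the substantive work was already done in computing the characteristic polynomial. The one point worth noting for completeness is that $\mul(k,m)$ can vanish for particular $(k,m)$, so not both $\pm 1$ need actually occur as eigenvalues on a given $H_m$; the claim is only that no eigenvalue outside $\{+1,-1\}$ can appear. Since the eigenvalues are among the roots of $\chi\bigl(\apode\big\vert_{H_m}\bigr)$ and these roots lie in $\{+1,-1\}$, the conclusion follows.
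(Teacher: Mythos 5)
Your proof is correct and follows exactly the paper's route: specialize Theorem~\ref{t:main} to $n=-1$, observe that the roots $(-1)^k$ of the resulting characteristic polynomial all lie in $\{+1,-1\}$, and conclude via the grouping in~\eqref{eq:char-apode}. The remark that some multiplicities $\mul(k,m)$ may vanish is a sensible clarification but does not change the argument.
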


\begin{remark}
Corollary \ref{c:main} fails for general Hopf algebras. Let $\omega$ be a primitive cube root of unity and consider Taft's Hopf algebra $T_3(\omega)$~\cite{Taf:1971}, with generators $\{g, x\}$, and relations $\{g^3=1$, $x^3=0$, $gx=\omega\,xg\}$.
The coproduct and antipode are determined by $\Delta(g)=g\otimes g$, $\apode(g)=g^{-1}$,
$\Delta(x)=1\otimes x+x\otimes g$, and $\apode(x)=-xg^{-1}$. Here $x^2+\omega\, x^2 g$
is an eigenvector of $\apode$ with eigenvalue $\omega$.
\end{remark}

From Corollary~\ref{c:main} we deduce:

\begin{corollary}\label{c:diag}
The antipode is diagonalizable if and only if it is an involution.
\end{corollary}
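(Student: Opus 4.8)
The plan is to derive the result directly from Corollary~\ref{c:main} together with the standard criterion that a linear operator on a finite-dimensional space is diagonalizable precisely when its minimal polynomial is a product of distinct linear factors. Since the antipode $\apode$ preserves the grading, it restricts to each finite-dimensional component $H_m$, and both properties under consideration---being diagonalizable and being an involution---hold for $\apode$ on $H$ if and only if they hold for each restriction $\apode\big\vert_{H_m}$. I would therefore fix $m$ and argue on $H_m$, then reassemble.

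For the implication that an involution is diagonalizable, I would observe that $\apode^2=\id$ forces $\apode\big\vert_{H_m}$ to be annihilated by $x^2-1=(x-1)(x+1)$, a polynomial with distinct roots; hence $\apode\big\vert_{H_m}$ is diagonalizable. Concretely, $\tfrac12(\id+\apode)$ and $\tfrac12(\id-\apode)$ are complementary idempotents projecting onto the $\pm1$-eigenspaces, so this direction in fact requires no finiteness hypothesis at all.

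For the converse, suppose $\apode\big\vert_{H_m}$ is diagonalizable. Its minimal polynomial is then $\prod_\lambda (x-\lambda)$, the product running over its distinct eigenvalues. By Corollary~\ref{c:main} every such $\lambda$ lies in $\{+1,-1\}$, so the minimal polynomial divides $(x-1)(x+1)=x^2-1$. Consequently $\apode\big\vert_{H_m}^2=\id$, and since $m$ was arbitrary, $\apode^2=\id$ on all of $H$.

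The argument is essentially immediate once Corollary~\ref{c:main} is in hand, so I do not anticipate a genuine obstacle; the content lies entirely in the eigenvalue computation already carried out, and what remains is routine linear algebra. The only point requiring care is the passage between $H$ and its homogeneous components, since eigenvectors of $\apode$ need not be homogeneous. In the converse direction I would note that if $v=\sum_m v_m$ satisfies $\apode(v)=\lambda v$, then $\apode(v_m)=\lambda v_m$ for each $m$ because $\apode$ is graded; thus the eigenvalue restriction of Corollary~\ref{c:main} applies to $\lambda$ componentwise, and the reduction to finite-dimensional pieces is legitimate.
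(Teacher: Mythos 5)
Your argument is correct and is precisely the deduction the paper intends: Corollary~\ref{c:main} gives that all eigenvalues are $\pm1$, so on each finite-dimensional $H_m$ diagonalizability is equivalent to the minimal polynomial dividing $(x-1)(x+1)$, i.e.\ to $\apode^2=\id$. Your care about passing between $H$ and its homogeneous components (using that $\apode$ is graded) is a sound and welcome addition, but the route is the same as the paper's.
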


\begin{remark}
The antipode of a graded connected Hopf algebra need not be an involution 
(or equivalently, diagonalizable).
For example, consider the Malvenuto-Reutenauer Hopf algebra of permutations (Example \ref{eg:MR}). Its antipode is of infinite order already on the homogeneous component of degree $3$; see Remark 5.6 in \cite{AguSot:2005}. 
\end{remark}

\subsection{Composition powers of the antipode}\label{ss:comp-pow}

Consider $\apode^2$, the composition of the antipode $\apode$ with itself.
Corollary~\ref{c:main} implies that $1$ is the only eigenvalue of $\apode^2$, so $\apode^2-\id$ is
nilpotent on each homogeneous component.
We have the following more precise result.
\begin{proposition}\label{p:even} 
The map $(\apode^2-\id)\vert_{H_m}$ is
nilpotent of order at most $m$.
\end{proposition}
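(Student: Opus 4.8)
The plan is to bypass Corollary~\ref{c:main} (which already gives that $\apode^2-\id$ is nilpotent on each $H_m$, since the eigenvalues of $\apode^2$ are all $1$) and instead extract the precise bound $m$ on the nilpotency order from the coradical filtration. The key observation I would establish is that $\apode^2-\id$ \emph{strictly lowers the coradical filtration}, i.e. $(\apode^2-\id)(H^{(j)})\subseteq H^{(j-1)}$; once this is known, iterating $m$ times and using $H_m\subseteq H^{(m)}$ will land inside $(H^{(0)})_m=0$.

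First I would record that $\apode$, and hence $\apode^2$, preserves the coradical filtration. In the graded connected setting this is immediate from~\eqref{eq:s-def}, which writes $\apode$ as a locally finite convolution polynomial in $\id$ and $\iota\circ\varepsilon$; since the structure maps of $H$ are filtration-preserving, so is any convolution product of filtration-preserving maps, and therefore so is $\apode$. Consequently the induced endomorphisms $\gr(\apode)$ and $\gr(\apode^2)$ of $\gr H$ are defined, and functoriality of $\gr$ with respect to composition of filtration-preserving maps gives $\gr(\apode^2)=\bigl(\gr(\apode)\bigr)^2$.

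The crux is the next step. By Lemma~\ref{l:foissy} the bialgebra $\gr H$ is commutative, and applying $\gr$ to the antipode equation (via~\eqref{eq:gr-conv}) identifies $\gr(\apode)$ with the antipode of $\gr H$. The antipode of a commutative Hopf algebra is an involution, so $\bigl(\gr(\apode)\bigr)^2=\id_{\gr H}$. Combined with the previous paragraph this yields $\gr(\apode^2)=\id_{\gr H}=\gr(\id_H)$, hence $\gr(\apode^2-\id)=0$. Unwinding the definition of $\gr$, this is exactly the assertion that
\[
(\apode^2-\id)\bigl(H^{(j)}\bigr)\subseteq H^{(j-1)}
\]
for all $j$ (with $H^{(-1)}:=0$). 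I expect this middle step to be the only real obstacle, as it is where commutativity of $\gr H$ and the functoriality of $\gr$ for composition (as opposed merely to convolution) must be invoked correctly.

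Finally I would merge this with the grading. Since $\apode$ preserves degree, so does $\apode^2-\id$, whence it carries $(H^{(j)})_m:=H^{(j)}\cap H_m$ into $(H^{(j-1)})_m$. Recalling from Section~\ref{ss:graded} that $H_m\subseteq H^{(m)}$, so $(H^{(m)})_m=H_m$, and that $(H^{(0)})_m=H_0\cap H_m=0$ for $m\geq1$, applying $\apode^2-\id$ exactly $m$ times gives
\[
(\apode^2-\id)^m(H_m)\subseteq(H^{(0)})_m=0,
\]
which is the desired bound. (The case $m=0$ is trivial, as $\apode$ fixes $1$.)
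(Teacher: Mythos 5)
Your proposal is correct and follows essentially the same route as the paper: both arguments rest on the commutativity of $\gr H$ (Lemma~\ref{l:foissy}) forcing $\apode^2=\id$ on $\gr H$, hence $(\apode^2-\id)(H^{(j)})\subseteq H^{(j-1)}$, and then iterate using $H_m\subseteq H^{(m)}$. The only cosmetic difference is at the bottom of the filtration: the paper stops the induction at $H^{(1)}=\kk\oplus\calP(H)$, where $\apode^2-\id$ vanishes outright, while you descend one more step to $H^{(0)}$ and use $(H^{(0)})_m=0$ for $m\geq1$; both yield the same bound $m$.
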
 
\begin{proof}
Since $\gr(H)$ is commutative, $\apode^2 =\id$ on $\gr(H)$. Hence
$(\apode^2 - \id)(H^{(m)}) \subseteq H^{(m-1)}$ for all $m\geq 1$.
On $H^{(1)}=\kk\oplus \calP(H)$, we have $\apode=\pm \id$, and then $\apode^2-\id = 0$.
By induction, $(\apode^2 - \id)^m (H^{(m)}) = 0$ for all $m\geq 1$.
The statement follows by recalling that $H_m \subseteq H^{(m)}$.
\end{proof}
 
\begin{remark} 
Let $d_m$ be the order of nilpotency of $(\apode^2-\id)\vert_{H_m}$, so that $1\leq d_m\leq m$
by Proposition~\ref{p:even}.
The lower bound $d_m=1$ is attained by any involutory Hopf algebra, for all $m$.
Computations suggest that the Hopf algebra of \emph{signed permutations}~\cite{BonHoh:2006} attains the upper bound $d_m=m$ for all $m\geq 1$.
\end{remark}

\begin{example}\label{eg:MR}
Consider the Malvenuto--Reutenauer Hopf algebra $H=\ssym$~\cite{MalReu:1995}.
We claim that $d_m\leq m-1$ for $m\geq 2$ (and $d_1=1$).
Let $\id_m=123\ldots m$ be the identity permutation, in one-line notation, and let $\omega_m$ be the longest permutation of $m$ elements, $\omega_m=m\ldots 321$. 
Let $\mathcal{F}$ and $\mathcal{M}$ denote the \emph{fundamental} and
\emph{monomial} bases of $H$, in the notation of~\cite{AguSot:2005}. It follows from~\cite[Cor. 6.3]{AguSot:2005} that 
\[
H_m = \bigl(H_m\cap H^{(m-1)}\bigr) \oplus K_m,
\]
where $K_m$ is the one-dimensional space spanned by $\mathcal{F}_{\omega_m}=\mathcal{M}_{\omega_m}$.
The proof of Proposition~\ref{p:even} shows that on $H_m\cap H^{(m-1)}$ the order of nilpotency is at most $m-1$. On the other hand, it is easy to see that
\[
\apode(\mathcal{F}_{\omega_m}) = (-1)^m \mathcal{F}_{\id_m} \qand
\apode(\mathcal{F}_{\id_m}) = (-1)^m \mathcal{F}_{\omega_m}.
\]
Therefore, $\apode^2$ is the identity on $K_m$. The claim follows. 
\end{example}

We turn to higher composition powers of the antipode. Since $H^{\mathrm{op}}$ is another graded connected Hopf algebra, it possesses an antipode, and this is the inverse of $\apode$ by \cite[Lem. 1.5.11]{Mon:1993} or \cite[Prop. 1.23]{AguMah:2013}.
In particular, $\apode$ is invertible, and we may consider powers $\apode^n$ for any integer $n$.

\begin{proposition}\label{p:comp-pow}
For any integer $n$,
\begin{equation}\label{eq:comp-pow}
	\chi\bigl(\apode^n\big\vert_{H_m}\bigr)(x) = 
	\begin{cases}
\chi\bigl(\id\big\vert_{H_m}\bigr)(x) & \text{ if $n$ is even,} \\
\chi\bigl(\apode\big\vert_{H_m}\bigr)(x)         & \text{ if $n$ is odd.}
\end{cases}
\end{equation} 
\end{proposition}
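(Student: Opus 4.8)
The plan is to reduce the statement to a single linear-algebra lemma and then feed in the structural information about $\apode^2$ that has already been obtained. Recall first that $\apode^n$ here denotes the $n$-fold composition, which is defined for every integer $n$ because $\apode$ is invertible. The lemma I would isolate is: if $A$ and $B$ are commuting endomorphisms of a finite-dimensional space whose difference $A-B$ is nilpotent, then $\chi_A=\chi_B$. This follows by simultaneously triangularizing the commuting pair $B$ and $N:=A-B$ over an algebraic closure; in such a basis $N$ is strictly upper triangular, so $A=B+N$ and $B$ share the same diagonal, hence the same characteristic polynomial.

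The structural input is that $\apode^2$ is unipotent on each $H_m$, which is exactly Proposition~\ref{p:even}: the operator $\apode^2-\id$ is nilpotent. Consequently every integer power $(\apode^2)^j$ is unipotent as well---for $j\geq 0$ by expanding $(\id+N)^j$, and for $j<0$ because the inverse of a unipotent operator $\id+N$ is $\id-N+N^2-\cdots$, again unipotent.

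For even $n=2j$ we have $\apode^n=(\apode^2)^j$, so $\apode^n-\id$ is nilpotent; since $\apode^n$ commutes with $\id$, the lemma gives $\chi\bigl(\apode^n\vert_{H_m}\bigr)=\chi\bigl(\id\vert_{H_m}\bigr)$. For odd $n=2j+1$ write $\apode^n=\apode\circ(\apode^2)^j$, so that
\[
\apode^n-\apode=\apode\circ\bigl((\apode^2)^j-\id\bigr).
\]
The second factor is nilpotent and commutes with the invertible operator $\apode$; since an invertible operator times a commuting nilpotent operator is nilpotent (their powers multiply, so $(AN)^k=A^kN^k=0$ for $k\gg 0$), the difference $\apode^n-\apode$ is nilpotent. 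As $\apode^n$ commutes with $\apode$, the lemma yields $\chi\bigl(\apode^n\vert_{H_m}\bigr)=\chi\bigl(\apode\vert_{H_m}\bigr)$.

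The one genuine subtlety---and the step I expect to require the most care---is the commutativity hypothesis in the lemma: one cannot conclude equality of characteristic polynomials from nilpotency of the difference alone, so it is essential both that $\apode^n$ commute with $\apode$ (and with $\id$) and that the factorization of $\apode^n-\apode$ exhibit a genuinely nilpotent operator. Both hold because all operators in sight lie in the commutative subalgebra of $\End(H_m)$ generated by $\apode$. An alternative route that sidesteps the lemma is to decompose $H_m$ into the generalized eigenspaces $V_{+1}\oplus V_{-1}$ of $\apode$ afforded by Corollary~\ref{c:main}; on each summand $\apode$ is a sign times a unipotent operator, so $\apode^n$ is $(\pm1)^n$ times a unipotent operator, and reading off the eigenvalues $(\pm1)^n$ with their multiplicities recovers~\eqref{eq:comp-pow} directly.
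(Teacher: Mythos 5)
Your argument is correct, but it takes a genuinely different route from the paper's. The paper disposes of the proposition in two lines: exactly as in the proof of Theorem~\ref{t:main}, the characteristic polynomial of $\apode^n\big\vert_{H_m}$ may be computed on $\gr H$ (the operator preserves the coradical filtration and $\gr$ is compatible with composition), and $\gr H$ is commutative by Lemma~\ref{l:foissy}, so there $\apode^2=\id$ holds exactly and $\apode^n$ literally equals $\id$ or $\apode$ according to the parity of $n$ --- nothing further is needed. You instead work directly on $H_m$, taking Proposition~\ref{p:even} (nilpotency of $\apode^2-\id$) as the structural input and combining it with the linear-algebra lemma that commuting endomorphisms with nilpotent difference share a characteristic polynomial; your verification of that lemma by simultaneous triangularization, and of the nilpotency of $\apode^n-\apode$ for odd $n$ via the factorization $\apode\circ\bigl((\apode^2)^j-\id\bigr)$, are both sound, and you are right that the commutativity hypothesis is the point that cannot be dropped. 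The two proofs ultimately rest on the same fact --- the commutativity of $\gr H$ --- since that is what drives Proposition~\ref{p:even}; what your version buys is that it isolates exactly the hypotheses used ($\apode^2$ unipotent, all operators in the commutative subalgebra generated by $\apode$), so it would transfer to any setting where those hold without re-running the $\gr$ reduction, at the cost of an extra lemma. Your alternative route through the generalized eigenspace decomposition $H_m=V_{+1}\oplus V_{-1}$ furnished by Corollary~\ref{c:main} is also correct and is arguably the cleanest of the three.
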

\begin{proof}
As in the proof of Theorem~\ref{t:main}, we may assume that $H$ is commutative.
In this case, $\apode^2=\id$ and hence $\apode^n=\id$ for even $n$ and $\apode^n=\apode$ for odd $n$.
\end{proof}


\section{The trace of the Adams operators}\label{s:trace}

We study the trace of the Adams operators on $H$. The generating functions for their
sequences of traces admit closed expressions in terms of the inverse Euler transform of the dimension sequence of $H$. 
The generating function for the trace of the antipode is particularly remarkable.

\subsection{Generating functions for the trace}\label{ss:gen-trace}

We return to the situation of Theorem~\ref{t:main}. Thus, $H$ is a
graded connected Hopf algebra, and the integers $\mul(k,m)$ are
determined by its dimension sequence through \eqref{eq:binom-g}, \eqref{eq:eul},
and \eqref{eq:alt eul}.

As an immediate consequence of this theorem, we have:

\begin{corollary}\label{c:trace-power}
For all scalars $n$ and nonnegative integers $m$,
\begin{equation}\label{eq:trace-power}
	\tr\bigl(\Psi_n\big\vert_{H_m}\bigr) = \sum_{k=0}^m n^k \mul(k,m)
\end{equation}
In particular,
\begin{equation}\label{eq:trace-antipode}
	\tr\bigl(\apode\big\vert_{H_m}\bigr) =  \sum_{k=0}^m (-1)^k\mul(k,m)  =  \emul(m) - \omul(m). 
\end{equation}
\end{corollary}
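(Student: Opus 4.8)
The plan is to read the trace directly off the characteristic polynomial furnished by Theorem~\ref{t:main}. Recall that for a linear operator on a finite-dimensional space the trace equals the sum of its eigenvalues counted with algebraic multiplicity, and that these multiplicities are precisely the exponents appearing in the factored characteristic polynomial. First I would invoke~\eqref{eq:char poly}, which expresses $\chi\bigl(\Psi_n\big\vert_{H_m}\bigr)(x)$ as $\prod_{k=0}^m (x-n^k)^{\mul(k,m)}$. This exhibits the eigenvalues of $\Psi_n\big\vert_{H_m}$ as the powers $n^k$, each occurring with multiplicity $\mul(k,m)$. Summing the eigenvalues then gives $\tr\bigl(\Psi_n\big\vert_{H_m}\bigr) = \sum_{k=0}^m n^k\,\mul(k,m)$, which is~\eqref{eq:trace-power}. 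Since the base field has characteristic zero and the eigenvalues $n^k$ are themselves scalars, no passage to an extension field is required: the sum of the roots equals the negative of the subleading coefficient of the monic characteristic polynomial, which is exactly the trace.

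For the antipode I would specialize to $n=-1$, using $\apode=\Psi_{-1}$. Then $n^k=(-1)^k$, and~\eqref{eq:trace-power} becomes the alternating sum $\sum_{k=0}^m (-1)^k\,\mul(k,m)$. Splitting this sum according to the parity of $k$, and recalling the definitions of $\emul(m)$ and $\omul(m)$ as the sums of $\mul(k,m)$ over even and over odd $k$ respectively, yields $\emul(m)-\omul(m)$, which establishes~\eqref{eq:trace-antipode}.

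There is no genuine obstacle here: the statement is an immediate corollary of Theorem~\ref{t:main}, and both displayed identities follow by elementary bookkeeping once the eigenvalues and their multiplicities are in hand. An alternative derivation would bypass the characteristic polynomial and instead take traces termwise in~\eqref{eq:id=eul}, after reducing to the cocommutative case $\calS(\frakg)$ exactly as in the proof of Theorem~\ref{t:main}. There $\Eul k$ acts as the projection onto $(\frakg^k)_m$, so $\tr\bigl(\Eul k\big\vert_{\calS(\frakg)_m}\bigr) = \mul(k,m)$, and the chain of reductions preserves traces because it preserves characteristic polynomials. Either route delivers the result with equal ease.
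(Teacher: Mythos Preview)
Your proposal is correct and matches the paper's approach: the paper states this corollary as an immediate consequence of Theorem~\ref{t:main}, with no further argument given. Your elaboration of why the trace equals the sum of eigenvalues with multiplicity, and the specialization to $n=-1$, simply spells out what the paper leaves implicit.
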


We turn to generating functions for the trace. First we consider the
$n$-th Adams operator. Recall that the sequence $(g_i)$ is determined by the
dimension sequence of $H$ through~\eqref{eq:alt eul}.

\begin{corollary}\label{c:trace-power-gen}
For all scalars $n$,
\begin{equation}\label{eq:trace-power-gen}
\sum_{m\geq 0} \tr\bigl(\Psi_n\big\vert_{H_m}\bigr)\, t^m = 
\prod_{i\geq 1} (1-nt^i)^{-g_i}.
\end{equation}
\end{corollary}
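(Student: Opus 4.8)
The plan is to start from the semicombinatorial formula for the trace given in Corollary~\ref{c:trace-power}, namely $\tr\bigl(\Psi_n\big\vert_{H_m}\bigr) = \sum_{k=0}^m n^k \mul(k,m)$, and to feed it directly into the generating series. Since the upper limit $k\leq m$ is automatic because $\mul(k,m)=0$ for $k>m$ by~\eqref{eq:eul0}, I can write
\[
\sum_{m\geq 0} \tr\bigl(\Psi_n\big\vert_{H_m}\bigr)\, t^m = \sum_{m\geq 0}\Bigl(\sum_{k\geq 0} n^k \mul(k,m)\Bigr) t^m = \sum_{k,m\geq 0} \mul(k,m)\, n^k t^m.
\]
The key observation is that this last double sum is exactly the bivariate generating series~\eqref{eq:sym-gen} for the symmetric algebra $\calS(\frakg)$, specialized at $s = n$. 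Indeed,~\eqref{eq:sym-gen} reads $\sum_{k,m\geq 0}\mul(k,m)\, s^k t^m = \prod_{i\geq 1}(1-st^i)^{-g_i}$, where the $g_i$ are the dimensions of the homogeneous pieces of $\frakg$.

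First I would substitute $s = n$ into~\eqref{eq:sym-gen}; this immediately yields the desired right-hand side $\prod_{i\geq 1}(1-nt^i)^{-g_i}$. The only point requiring a word of justification is the interchange of summation order that collapses the trace formula into the double sum; this is harmless because, for each fixed $m$, the inner sum over $k$ is finite (it ranges over $0\leq k\leq m$), so there is no convergence issue and the manipulation is purely formal in the ring of power series $\field[[t]]$ (or $\field[n][[t]]$ if one wishes to track $n$ as an indeterminate). The $g_i$ appearing here are precisely those attached to $H$ via~\eqref{eq:alt eul}, as noted in the statement, so no new quantities are introduced.

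I do not expect any genuine obstacle: the corollary is a direct packaging of the pointwise trace computation into a generating function, and the entire content has already been established in~\eqref{eq:sym-gen}. If there is a subtle point at all, it is merely to remark that both sides are to be read as elements of $\field[[t]]$ for each scalar $n$ (equivalently, the identity holds at the level of the two-variable series $\prod_i(1-st^i)^{-g_i}$ before specializing $s\mapsto n$), so that the infinite product on the right is well-defined degree by degree in $t$. With that understood, the proof is essentially a one-line substitution.
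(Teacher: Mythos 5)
Your proof is correct and follows exactly the paper's own argument: the paper's proof is the one-line remark that the result ``follows at once from~\eqref{eq:sym-gen} and~\eqref{eq:trace-power},'' which is precisely the substitution $s=n$ you carry out. Your added remarks about the finiteness of the inner sum and the formal power series interpretation are sound but not needed beyond what the paper already assumes.
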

\begin{proof}
This follows at once from~\eqref{eq:sym-gen} and~\eqref{eq:trace-power}.
\end{proof}

For each scalar $n$, let
\[
h_n(t) := \sum_{m\geq 0} \tr\bigl(\Psi_n\big\vert_{H_m}\bigr)\, t^m 
\]
denote the generating function for the sequence of traces of $\Psi_n$.
As a consequence of Corollary~\ref{c:trace-power-gen},
these functions satisfy certain interesting relations. In order to state them, let
\[
\mu_k := \{\omega\in\bC \mid \omega^k=1\}
\]
denote the group of complex $k$-th roots of unity. 

\begin{proposition}\label{p:trace-power-rel}
For each scalar $n$ and positive integer $k$,
\begin{equation}\label{eq:trace-power-rel}
h_{n^k}(t^k) = \prod_{\omega\in\mu_k} h_{\omega n}(t).
\end{equation}
In particular,
\begin{equation}\label{eq:trace-power-rel2}
h_{n^2}(t^2) = h_n(t)\, h_{-n}(t).
\end{equation}
\end{proposition}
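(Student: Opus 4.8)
The plan is to derive Proposition~\ref{p:trace-power-rel} directly from the product formula for the trace generating function established in Corollary~\ref{c:trace-power-gen}, namely
\[
h_n(t) = \prod_{i\geq 1}(1-nt^i)^{-g_i}.
\]
Everything reduces to a factorization identity for the factor $(1-nt^i)$ over the $k$-th roots of unity. First I would write out the right-hand side of~\eqref{eq:trace-power-rel} using this formula:
\[
\prod_{\omega\in\mu_k} h_{\omega n}(t) = \prod_{\omega\in\mu_k}\prod_{i\geq 1}\bigl(1-\omega n t^i\bigr)^{-g_i}
= \prod_{i\geq 1}\Bigl(\prod_{\omega\in\mu_k}\bigl(1-\omega n t^i\bigr)\Bigr)^{-g_i}.
\]
The inner product over $\omega$ is the crux of the matter.

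The key step is the elementary polynomial identity
\[
\prod_{\omega\in\mu_k}(1-\omega z) = 1-z^k,
\]
valid for any indeterminate $z$, which follows from the factorization $Z^k-1 = \prod_{\omega\in\mu_k}(Z-\omega)$ evaluated appropriately (set $Z = 1/z$ and clear denominators, or observe both sides are monic-normalized polynomials in $z$ with the same roots and constant term). Applying this with $z = n t^i$ turns the inner product into $1 - n^k t^{ik}$, so the displayed expression collapses to
\[
\prod_{i\geq 1}\bigl(1-n^k t^{ik}\bigr)^{-g_i} = \prod_{i\geq 1}\bigl(1-n^k (t^k)^i\bigr)^{-g_i} = h_{n^k}(t^k),
\]
where the last equality is again Corollary~\ref{c:trace-power-gen}, now read with $t$ replaced by $t^k$ and $n$ replaced by $n^k$. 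This proves~\eqref{eq:trace-power-rel}, and~\eqref{eq:trace-power-rel2} is the special case $k=2$, where $\mu_2 = \{1,-1\}$ gives $h_{n^2}(t^2) = h_n(t)\,h_{-n}(t)$.

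I do not anticipate a serious obstacle here; the result is essentially formal once Corollary~\ref{c:trace-power-gen} is in hand. The only point requiring mild care is bookkeeping: one must treat these as identities of formal power series in $t$ (the products converge in the $t$-adic topology since each factor is $1$ plus higher-order terms), and confirm that the substitution $t\mapsto t^k$ interacts correctly with the infinite product — the exponent $g_i$ stays attached to the index $i$ even though the degree in $t$ becomes $ik$, which is exactly what the final rewriting above records. I would state these as identities in $\field[[t]]$ to avoid any convergence concerns and keep the argument purely algebraic.
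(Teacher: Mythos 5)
Your proof is correct and is essentially the same as the paper's: both start from the product formula $h_n(t)=\prod_{i\geq 1}(1-nt^i)^{-g_i}$ of Corollary~\ref{c:trace-power-gen} and reduce the claim to the factorization $1-z^k=\prod_{\omega\in\mu_k}(1-\omega z)$ applied with $z=nt^i$. The paper simply runs the chain of equalities from left to right rather than right to left, and your remarks about working in $\kk[[t]]$ are sensible but add nothing beyond what the paper leaves implicit.
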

\begin{proof}
We employ the factorization
\[
1-t^k = \prod_{\omega\in\mu_k}(1-\omega t).
\]
Then, from~\eqref{eq:trace-power-gen}, we have
\[
h_{n^k}(t^k) = \prod_{i\geq 1} (1-n^kt^{ki})^{-g_i} = 
\prod_{i\geq 1}\prod_{\omega\in\mu_k} (1-\omega n t^i)^{-g_i} = 
\prod_{\omega\in\mu_k}
h_{\omega n}(t).\qedhere
\]
\end{proof}

The generating function for the trace of the antipode takes a special form.
Let
\[
h(t) := 1+ \sum_{m\geq 1} h_m\, t^m
\]
denote the generating function for the dimension sequence of $H$. 

\begin{corollary}\label{c:trace-antipode-gen}
\begin{equation}\label{eq:trace-antipode-gen}
\sum_{m\geq 0} \tr\bigl(\apode\big\vert_{H_m}\bigr)\, t^m = \frac{h(t^2)}{h(t)}.
\end{equation}
\end{corollary}
\begin{proof}
Since $\Psi_1=\id$ and $\Psi_{-1}=\apode$, we have
$h(t)=h_1(t)$
and $\sum_{m\geq 0} \tr\bigl(\apode\big\vert_{H_m}\bigr)\, t^m = h_{-1}(t)$. Thus, 
\eqref{eq:trace-antipode-gen} is the case $n=1$ of~\eqref{eq:trace-power-rel2}.
\end{proof}

\begin{remark}
Corollary~\ref{c:trace-antipode-gen} shows that the sequence of antipode traces is determined by the dimension sequence in a simple manner. The result also shows that, conversely, the
dimension sequence is determined by the sequence of antipode traces,
since the relation~\eqref{eq:trace-antipode-gen} can be solved for $h(t)$. 
\end{remark}

\begin{example}\label{eg:geometric}
Suppose the dimension sequence of $H$ is given by $h_m:=r^m$, where $r$ is a fixed nonnegative integer. Then $h(t)=1/(1-rt)$ and
\[
\frac{h(t^2)}{h(t)} = \frac{1-rt}{1-rt^2} = \sum_{n\geq 0} r^n\,t^{2n} - \sum_{n\geq 0} r^{n+1}\,t^{2n+1}.
\]
It follows from~\eqref{eq:trace-antipode-gen} that
\[
\tr\bigl(\apode\big\vert_{H_m}\bigr)= 
\begin{cases}
r^{m/2} & \text{ if $m$ is even,} \\
-r^{(m+1)/2}         & \text{ if $m$ is odd.}
\end{cases}
\]

We have computed the trace of the antipode without knowing anything other than the dimension sequence of $H$. A Hopf algebra with the given dimension sequence is the free algebra on $r$ primitive generators of degree $1$; a direct computation of the trace 
can then be carried out. We do this for the dual Hopf algebra in Example~\ref{eg:geometric2}.
\end{example}

We record the generating function for the trace of the composition powers of the antipode.

\begin{corollary}\label{c:comp-pow}
For any integer $n$,
\begin{equation}\label{eq:comp-pow-gen}
	\sum_{m\geq 0} \tr\bigl(\apode^n\big\vert_{H_m}\bigr)\, t^m =  
	\begin{cases}
h(t) & \text{ if $n$ is even,} \\
h(t^2)/h(t)      & \text{ if $n$ is odd.}
\end{cases}
\end{equation} 
\end{corollary}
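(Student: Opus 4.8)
The plan is to read this off from Proposition~\ref{p:comp-pow}, exploiting the fact that, for an operator on a finite-dimensional space, the trace is an invariant of the characteristic polynomial. Indeed, if $\dim H_m = d$, then $\tr\bigl(\apode^n\big\vert_{H_m}\bigr)$ is, up to sign, the coefficient of $x^{d-1}$ in $\chi\bigl(\apode^n\big\vert_{H_m}\bigr)(x)$; equivalently, it is the sum of the roots of that polynomial counted with multiplicity. Thus any two operators sharing a characteristic polynomial have equal trace.

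Applying this observation to Proposition~\ref{p:comp-pow}, I would conclude that
\[
\tr\bigl(\apode^n\big\vert_{H_m}\bigr) =
\begin{cases}
\tr\bigl(\id\big\vert_{H_m}\bigr) & \text{if $n$ is even,}\\
\tr\bigl(\apode\big\vert_{H_m}\bigr) & \text{if $n$ is odd,}
\end{cases}
\]
for every $m$. The two cases are then handled separately. For even $n$, the trace of the identity on $H_m$ is simply $\dim H_m = h_m$, so that $\sum_{m\geq 0}\tr\bigl(\id\big\vert_{H_m}\bigr)\,t^m = \sum_{m\geq 0} h_m\,t^m = h(t)$. For odd $n$, the generating function $\sum_{m\geq 0}\tr\bigl(\apode\big\vert_{H_m}\bigr)\,t^m$ is precisely the one computed in Corollary~\ref{c:trace-antipode-gen}, namely $h(t^2)/h(t)$. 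Assembling the two cases yields~\eqref{eq:comp-pow-gen}.

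There is no real obstacle here: once Proposition~\ref{p:comp-pow} is in hand, the statement is a formal consequence, the only slightly delicate (but entirely standard) point being the passage from equality of characteristic polynomials to equality of traces. Alternatively, one could bypass the characteristic-polynomial formulation and mimic the proof of Proposition~\ref{p:comp-pow} directly at the level of traces: reduce to $\gr H$ (which preserves traces by the same filtration argument used in the proof of Theorem~\ref{t:main}), where commutativity forces $\apode^2 = \id$, so that $\apode^n = \id$ for even $n$ and $\apode^n = \apode$ for odd $n$, and then invoke Corollary~\ref{c:trace-antipode-gen} together with $\tr\bigl(\id\big\vert_{H_m}\bigr) = h_m$.
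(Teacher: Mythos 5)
Your proposal is correct and follows exactly the paper's route: the paper proves this corollary by citing Proposition~\ref{p:comp-pow} together with Corollary~\ref{c:trace-antipode-gen}, which is precisely your argument (the passage from equal characteristic polynomials to equal traces being the standard fact you note). Your spelled-out details and the alternative direct reduction to $\gr H$ are both fine but add nothing beyond the paper's one-line proof.
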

\begin{proof}
This follows from Proposition~\ref{p:comp-pow} and Corollary~\ref{c:trace-antipode-gen}.
\end{proof}

\subsection{The trace of the antipode versus the dimension sequence}\label{ss:fun-m}

In this section, we write
\[
a_m := \tr\bigl(\apode\big\vert_{H_m}\bigr)
\]
for each nonnegative integer $m$. Let $a(t)$ be its generating function. 

We analyze the behavior of the sequence $(a_m)$ in relation
to the dimension sequence $(h_m)$.

\begin{corollary}\label{c:fun-m}
If the sequence $(h_m)$ satisfies a linear recursion with constant coefficients, then so does 
$(a_m)$.
\end{corollary}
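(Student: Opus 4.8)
The plan is to translate the recursion hypothesis on $(h_m)$ into a statement about the rational generating function $h(t)$, and then read off the corresponding property of $a(t)$ via Corollary~\ref{c:trace-antipode-gen}. Recall the standard fact that a sequence $(h_m)_{m\geq 0}$ satisfies a linear recursion with constant coefficients if and only if its generating function $h(t)$ is a rational function of $t$, that is, $h(t)=p(t)/q(t)$ for polynomials $p,q$ with $q(0)\neq 0$; the denominator $q$ encodes the characteristic polynomial of the recursion. So the first step is simply to observe that the hypothesis is equivalent to the rationality of $h(t)$.

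The second step is to invoke Corollary~\ref{c:trace-antipode-gen}, which gives
\[
a(t) = \frac{h(t^2)}{h(t)}.
\]
If $h(t)=p(t)/q(t)$ is rational, then $h(t^2)=p(t^2)/q(t^2)$ is rational as well, and hence
\[
a(t) = \frac{p(t^2)\,q(t)}{q(t^2)\,p(t)}
\]
is a quotient of polynomials in $t$. Since $h(0)=1\neq 0$, we have $p(0)\neq 0$, so the denominator $q(t^2)\,p(t)$ does not vanish at $t=0$; thus $a(t)$ is a genuine rational generating function with nonzero constant term in the denominator. By the same equivalence invoked in the first step, the rationality of $a(t)$ is exactly the assertion that $(a_m)$ satisfies a linear recursion with constant coefficients, which is what we want.

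There is essentially no hard step here: the argument is a two-line manipulation of rational functions once the recursion/rationality dictionary is in place. The only point requiring a word of care is to confirm that the resulting denominator is honestly nonzero at the origin (so that the partial-fraction/recursion correspondence applies cleanly), and this follows immediately from $h(0)=1$. The work is therefore entirely formal, relying on Corollary~\ref{c:trace-antipode-gen} together with the classical characterization of $C$-finite sequences by rationality of their generating series. I would write the proof to state this characterization, substitute $h(t)=p(t)/q(t)$ into the formula for $a(t)$, and conclude.
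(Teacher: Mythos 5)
Your proposal is correct and is essentially the paper's own argument: the paper likewise invokes the equivalence between $C$-finiteness and rationality of the generating function (citing Stanley) and then observes that $a(t)=h(t^2)/h(t)$ is rational whenever $h(t)$ is. Your extra remark about the denominator not vanishing at $t=0$ (via $h(0)=1$) is a sensible bit of care that the paper leaves implicit.
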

\begin{proof}
We employ~\cite[Thm. 4.1.1]{Sta:2012}.
In this situation, the series $h(t)$ is rational, and hence so is $a(t)$, by~\eqref{eq:trace-antipode-gen}.
\end{proof}

We turn to asymptotics. We assume there exists a meromorphic function $h(z)$ of a complex variable $z$,
holomorphic on a neighborhood of $0$, and such that its Taylor expansion is the generating function $h(t)$. 
It then follows from Pringsheim's Theorem~\cite[Thm. IV.6]{FlaSed:2009} that a dominant singularity occurs at a positive real number $R$. Moreover, $R\leq 1$. Indeed, if $R>1$, the coefficients $h_m$ would approach $0$, by the Exponential Growth Formula~\cite[Thm. IV.7]{FlaSed:2009}, and this would force the integers $h_m$ to be $0$ from a point on. 
(As we remark at the end of Section~\ref{ss:schur},
this can only happen if $H$ is the one-dimensional Hopf algebra, a triviality which we exclude from consideration.)

\begin{corollary}\label{c:asym}
Suppose that $R$ is the unique singularity of $h(z)$
in the disk $\abs{z}\leq R^{1/4}$. Let $\gamma$ be the order of this singularity.
Suppose further that $h(z)$ is nonzero in the disk $\abs{z}\leq R^{1/2}$, and
\[
h(-R^{1/2}) \neq \pm h(R^{1/2}).
\]
Then
\begin{equation}\label{eq:asym}
\frac{a_m}{h_m} \sim \frac{R^{m/2}}{2^{\gamma}}\,
\Bigl(\frac{1}{h(R^{1/2})} + (-1)^m\,\frac{1}{h(-R^{1/2})}\Bigr).
\end{equation}
\end{corollary}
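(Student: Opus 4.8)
The plan is to extract the asymptotics of $a_m = [t^m]\,a(t)$ and $h_m = [t^m]\,h(t)$ separately via singularity analysis, and then take the ratio. By Corollary~\ref{c:trace-antipode-gen}, $a(t) = h(t^2)/h(t)$, so the singularities of $a(z)$ come from two sources: the zeros of $h(z)$ in the denominator, and the singularities of $h(z^2)$ in the numerator. First I would locate the dominant singularity of $a(z)$. The numerator $h(z^2)$ inherits a singularity wherever $z^2$ hits a singularity of $h$; since the dominant singularity of $h$ is at $R$, the nearest such points are $z=\pm R^{1/2}$. The denominator $h(z)$ contributes singularities of $a$ precisely at its zeros, but the hypothesis that $h(z)$ is nonzero on $\abs{z}\leq R^{1/2}$ pushes all of these strictly outside the disk $\abs{z}\leq R^{1/2}$. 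Hence the dominant singularities of $a(z)$ are exactly $z=\pm R^{1/2}$, both lying on the circle $\abs{z}=R^{1/2}$.

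Next I would compute the local behavior of $a(z)$ at each of $\pm R^{1/2}$. Near $z=R^{1/2}$, the factor $h(z^2)$ behaves like a singularity of order $\gamma$ (the order of the singularity of $h$ at $R$), pulled back through the map $z\mapsto z^2$; the chain rule introduces a factor accounting for the derivative of $z^2$ at $R^{1/2}$, which is where the power of $2^{\gamma}$ and the $R^{1/2}$ scaling will originate. Meanwhile $1/h(z)$ is holomorphic and nonzero there, contributing the analytic prefactor $1/h(R^{1/2})$. An identical analysis at $z=-R^{1/2}$ gives the prefactor $1/h(-R^{1/2})$, but with the opposite sign convention in the pullback producing the $(-1)^m$ factor upon coefficient extraction. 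The hypothesis that $R$ is the unique singularity of $h$ in $\abs{z}\leq R^{1/4}$ is used to ensure that, after the substitution $z\mapsto z^2$, no other singularity of $h(z^2)$ competes inside $\abs{z}\leq R^{1/2}$, so the two points $\pm R^{1/2}$ genuinely dominate. The condition $h(-R^{1/2})\neq\pm h(R^{1/2})$ rules out degenerate cancellation between the two contributions and guarantees the stated two-term asymptotic is nontrivial.

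I would then invoke the transfer theorems of singularity analysis (as in~\cite{FlaSed:2009}) to convert these local expansions into asymptotics for $a_m$: the two dominant singularities $\pm R^{1/2}$ each contribute a term growing like $R^{-m/2}$ times the respective analytic prefactor, with the one at $-R^{1/2}$ carrying the $(-1)^m$ phase. In parallel, applying Pringsheim's Theorem and the transfer theorem directly to $h(z)$ at its dominant singularity $R$ gives $h_m \sim C\, R^{-m}$ for an appropriate constant absorbing the order $\gamma$. Forming the quotient $a_m/h_m$, the growth rates combine to leave $R^{m/2}$, and the constants rearrange into the bracketed expression in~\eqref{eq:asym}, with the factor $2^{-\gamma}$ emerging from the $z\mapsto z^2$ pullback as noted.

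The main obstacle I anticipate is bookkeeping the pullback $z\mapsto z^2$ cleanly through the singularity analysis: one must track how the singular exponent $\gamma$, the subexponential (polynomial or logarithmic) factors, and the constant prefactors transform under this substitution, and verify that the contributions from $h_m$ in the denominator of the ratio cancel exactly the corresponding subexponential factors so that only $R^{m/2}/2^{\gamma}$ survives. This requires the full strength of the transfer theorem rather than just the Exponential Growth Formula, since the order $\gamma$ must be handled, and care is needed to confirm that the two singularities $\pm R^{1/2}$ are of the same order and that no lower-order corrections from either spoil the leading asymptotic ratio. The hypotheses are evidently tailored precisely to eliminate these difficulties, so once the local expansions are set up the remaining work is the careful but routine assembly of the transfer estimates.
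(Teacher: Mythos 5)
Your proposal is correct and follows essentially the same route as the paper: identify $\pm R^{1/2}$ as the unique dominant singularities of $a(z)=h(z^2)/h(z)$ using the stated hypotheses, apply the standard transfer-theorem approximations to $a(z)$ there and to $h(z)$ at $R$, and take the ratio so that the $m^{\gamma-1}$ and $h^*(R)$ factors cancel, leaving $R^{m/2}/2^{\gamma}$ times the bracketed sum. The paper simply quotes these approximations from the singularity-analysis literature, whereas you spell out the local pullback through $z\mapsto z^2$ that produces the factor of $2^{\gamma}$; the content is the same.
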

\begin{proof}
The hypotheses guarantee that $R$ is the unique dominant singularity for $h(z)$,
and also that $\pm R^{1/2}$ are the unique dominant singularities for $a(z)=h(z^2)/h(z)$.
We then have the standard approximations~\cite[\S B.IV]{FlaSed:2009}, \cite[\S 5.2]{Wil:2006}
\[
h_m \sim \frac{1}{\Gamma(\gamma)}\, m^{\gamma-1} R^{-m} h^*(R)
\]
and
\[
a_m \sim \frac{2^\gamma}{\Gamma(\gamma)}\, m^{\gamma-1} R^{-m/2} h^*(R) \Bigl(\frac{1}{h(R^{1/2})} + (-1)^m\,\frac{1}{h(-R^{1/2})}\Bigr),
\]
where $\Gamma$ is the gamma function and ${\displaystyle h^*(R)= \lim_{z\to R} \Bigl(1-\frac{z}{R}\Bigr)^{\gamma}\, h(z)}$.
The result follows. 
\end{proof}

\begin{example}\label{eg:fib}
Suppose the dimension sequence is given by
\[
h_0=h_1=h_2:=1 \qand h_{m} := h_{m-1}+h_{m-2} \text{ for all $m\geq 3$.}
\]
Thus, for $m\geq 1$, $h_m$ are the Fibonacci numbers.
In this case,
\[
h(z) = \frac{z^2-1}{z^2+z-1} = (z^2-1)(z+\phi)^{-1}(z-1/\phi)^{-1},
\]
where $\phi=(1+\sqrt{5})/2$ is the golden ratio. The hypotheses of Corollary~\ref{c:asym}
are satisfied with $R=1/\phi$ and $\gamma=1$. We obtain from~\eqref{eq:asym}
the approximation
\[
\frac{a_m}{h_m} \sim 
\begin{cases}
\phi^{-m/2} & \text{ if $m$ is even,}\\
-\phi^{(-m+3)/2} & \text{ if $m$ is odd.}
 \end{cases}
\]

A Hopf algebra with this dimension sequence is discussed in Section~\ref{ss:peakqsym},
and the sequence $a_m$ is computed explicitly; see~\eqref{eq:apode-peak}. The above may then be seen to
follow from the well-known approximation $h_m\sim \phi^m/\sqrt{5}$
for the Fibonacci numbers.
\end{example}

\subsection{Schur indicators}\label{ss:schur}

A theme occurring in the recent Hopf algebra literature involves a
generalization of the \demph{Frobenius--Schur indicator function} of a finite group. 
If $\rho\colon G \to \mathrm{End}(V)$ is a complex representation of $G$, then the (second) indicator is
\[
	\nu_2(G,\rho) := \frac1{|G|}\sum_{g\in G} \tr\rho(g^2).
\]
The only values this invariant can take on irreducible representations are $0,1,-1$, and this occurs precisely when $V$ is a complex, real, or quaternionic representation, respectively~\cite[Prop. 39]{Ser:1977}. In \cite{LinMon:2000}, a reformulation of the definition was given in terms of convolution powers of the \emph{integral}%
\footnote{A construct present for finite-dimensional Hopf algebras that is unavailable for general graded connected Hopf algebras.}
 in $\bC G$. This extended the notion of (higher) Schur-indicators to all finite-dimensional Hopf algebras, and has since become a valuable tool for the study of these algebras \cite{KMM:2002, NgSch:2008, SagVeg:2012, Shi:2012}. In case $\rho$ is the regular representation (and $H$ is semisimple), it is shown in \cite{KSZ:2006} that the higher Schur indicators can be reformulated further, removing all mention of the integral: for all nonnegative integers $n$,
\[
	\nu_{n+1}(H) = \tr(\apode\circ \Psi_n).
\]
See also \cite{KMN:2012}. 
Our results lead to the
following formula for these invariants in case $H$ is graded connected (instead of finite-dimensional). 

\begin{corollary}\label{c:schur}
Let $H$ be a graded connected Hopf algebra. Then, for all scalars $n$ and nonnegative integers $m$,
\[
	 \tr(\apode\circ \Psi_n) = \sum_{k=0}^m (-n)^k \mul(k,m),
\]
where $\mul(k,m)$ is as in Theorem~\ref{t:main}. 
\end{corollary}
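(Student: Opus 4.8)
The plan is to run the chain of reductions from the proof of Theorem~\ref{t:main}, but applied to the composite operator $\apode\circ\Psi_n$ in place of a single Adams operator, and then to compute directly on the symmetric algebra. Throughout, the left-hand side is read as $\tr\bigl(\apode\circ\Psi_n\big\vert_{H_m}\bigr)$, the restriction to the $m$-th homogeneous component (which is what makes the right-hand side depend on $m$).

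First I would note that $\apode\circ\Psi_n$ preserves both the grading and the coradical filtration of $H$, since each of $\apode$ and $\Psi_n$ does. Because the trace of a filtration-preserving endomorphism is the sum of the traces induced on the successive quotients (compare Lemma~\ref{l:eigenvalues}(i)), the trace on $H_m$ equals the trace of the induced operator on $(\gr H)_m$. For this I need $\gr(\apode\circ\Psi_n)=\gr(\apode)\circ\gr(\Psi_n)$, which holds by functoriality of the passage to associated graded on filtered vector spaces; combined with~\eqref{eq:gr-conv} this yields $\gr(\apode)=\apode_{\gr H}$ and $\gr(\Psi_n)=\Psi_n$, so the induced operator is again $\apode\circ\Psi_n$, now on $\gr H$.

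Next I would pass to the graded dual $\tilH$ and then, via CMM and PBW, to $\calS(\frakg)$, exactly as in the proof of Theorem~\ref{t:main}. Dualization is a convolution-algebra isomorphism and preserves traces; the one wrinkle is that composition reverses under dualization, so $(\apode\circ\Psi_n)^*=\Psi_n^*\circ\apode^*$, which on $\tilH$ reads $\Psi_n\circ\apode$. But $\tr(\Psi_n\circ\apode)=\tr(\apode\circ\Psi_n)$ by cyclicity of the trace, so nothing is lost. The Hopf-algebra isomorphisms supplied by CMM and PBW are conjugations and preserve traces, and a second application of the filtration argument replaces $\gr\calU(\frakg)$ by $\calS(\frakg)$. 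Hence $\tr\bigl(\apode\circ\Psi_n\big\vert_{H_m}\bigr)=\tr\bigl(\apode\circ\Psi_n\big\vert_{\calS(\frakg)_m}\bigr)$.

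Finally, on the commutative cocommutative Hopf algebra $\calS(\frakg)$ the computation is immediate: the antipode is an algebra morphism with $\apode(w)=-w$ on each primitive generator, so $\apode(w_1\cdots w_k)=(-1)^k w_1\cdots w_k$, while $\Psi_n(w_1\cdots w_k)=n^k w_1\cdots w_k$; thus $\apode\circ\Psi_n$ acts on $\frakg^k$ as multiplication by $(-n)^k$. Equivalently $\apode\circ\Psi_n=\Psi_{-1}\circ\Psi_n=\Psi_{-n}$ there, so Corollary~\ref{c:trace-power} applied with scalar $-n$ to $\calS(\frakg)$ closes the argument, since $\dim(\frakg^k)_m=\mul(k,m)$. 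The only point requiring genuine care is the first one, namely checking that the reduction machinery of Theorem~\ref{t:main}, built to track characteristic polynomials of Adams operators, transports the \emph{trace} of the non-Adams composite $\apode\circ\Psi_n$ correctly; once the functoriality of $\gr$ with respect to filtration-preserving maps and the cyclicity of trace are secured, the remaining steps are routine.
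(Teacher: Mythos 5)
Your proposal is correct and follows essentially the same route as the paper: both reduce via the machinery of Theorem~\ref{t:main} to the (co)commutative setting and then identify $\apode\circ\Psi_n$ with $\Psi_{-n}$, concluding with the trace formula~\eqref{eq:trace-power}. The only differences are that the paper stops at commutative $\gr H$ and gets $\apode\circ\Psi_n=\mu^{(n-1)}\circ\apode^{\otimes n}\circ\Delta^{(n-1)}=\apode^{\ast n}=\Psi_{-n}$ from the antipode being an algebra morphism, whereas you push all the way to $\calS(\frakg)$ and read off the eigenvalue $(-n)^k$ on $\frakg^k$ directly --- and you make explicit (via reversal of composition under dualization plus cyclicity of the trace) a point the paper leaves implicit in the phrase ``as in the proof of Theorem~\ref{t:main}.''
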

\begin{proof}
As in the proof of Theorem \ref{t:main}, we may assume that $H$ is commutative. Then $\apode$ is a morphism of algebras, and we have
\[
\apode\circ \Psi_n = \apode\circ \mu^{(n-1)}\circ\Delta^{(n-1)} = \mu^{(n-1)}\circ \apode^{\otimes n} \circ \Delta^{(n-1)} = \apode^{\ast n} = \Psi_{-n}.
\] 
We used~\eqref{eq:adams2} and~\eqref{eq:adams3}.
The result follows from~\eqref{eq:trace-power}.
\end{proof}

\begin{remark}
We mention in passing that the only Hopf algebra $H$ that is at the same time connected
and finite-dimensional is the (unique) one-dimensional Hopf algebra. Indeed, 
combining Lemma~\ref{l:foissy} with CMM and PBW (as in the proof of Theorem~\ref{t:main}) we have that $H\cong\calS(V)$ as vector spaces, for some space $V$. But the only space $V$ for which the symmetric algebra is finite-dimensional is $V=0$. Hence $H\cong\kk$.
The situation is of course different over fields of positive characteristic or for $(-1)$-Hopf algebras.
\end{remark}

\section{The case of cofree graded connected Hopf algebras}\label{s:cofree}

We study the characteristic polynomial and the trace of the antipode of a
graded connected Hopf algebra that, as a graded coalgebra, is cofree. Since the
former are invariant under duality, the results apply as well to graded connected Hopf algebras that are free as algebras. (We make no further mention of this point as we proceed.)

\subsection{Cofreeness}\label{ss:cofree}

A graded connected Hopf algebra $H$ is \demph{cofree} if,
 as a graded coalgebra, it is isomorphic to a \demph{deconcatenation}
coalgebra $\cT(V)$ on a graded vector space $V$. 
The underlying space of the latter is 
\[
\cT(V) := \bigoplus_{k\geq 0} V^{\otimes k}
\]
(the same as that of the tensor algebra $\calT(V)$). The coproduct on a $k$-fold tensor is
\[
\Delta(x_1\cdots x_k) = 1\otimes (x_1\cdots x_k)+\sum_{i=1}^{k-1} (x_1\cdots x_i) \otimes (x_{i+1}\cdots x_k) + (x_1\cdots x_k)\otimes 1.
\]
For more details, see~\cite[\S 2.6]{AguMah:2010} or~\cite[\S 4.5]{Rad:2012}.

In this situation, we have
\[
H^{(m)} \cong \bigoplus_{k=0}^m V^{\otimes k} \qand \calP(H)\cong V
\]
as graded vector spaces. 

The \demph{shuffle product} of two tensors $x_1\ldots x_i$ and $x_{i+1}\ldots x_k$
is the following element of $\cT(V)$:
\[
\sum_{\sigma} x_{\sigma^{-1}(1)}\ldots x_{\sigma^{-1}(k)},
\]
where the sum is over all permutations $\sigma\in S_k$ such that
\[
\sigma(1)<\cdots<\sigma(i) \qand \sigma(i+1)<\cdots<\sigma(k).
\]
The shuffle product turns the deconcatenation coalgebra $\cT(V)$ 
into a commutative graded connected Hopf algebra. The antipode acts on a tensor 
by reversing the components:
\begin{equation}\label{eq:ant-shuffle}
\apode(x_1x_2\dotsb x_k) = (-1)^k x_k\dotsb x_2x_1.
\end{equation}

Under the assumption of cofreeness, the following stronger version
of Lemma~\ref{l:foissy} is available.

\begin{lemma}\label{l:as05}
Let $H$  be a graded connected Hopf algebra that is cofree as a graded coalgebra.
Then 
\[
\gr H \cong \cT(V)
\]
as graded Hopf algebras, where $V=\calP(H)$.
\end{lemma}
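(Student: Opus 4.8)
The plan is to produce an explicit isomorphism $\gr H \cong \cT(V)$ of graded Hopf algebras, where $V=\calP(H)$. By Lemma~\ref{l:foissy}, $\gr H$ is a commutative graded connected Hopf algebra, and by hypothesis $H$ (hence $\gr H$) is cofree as a graded coalgebra, so $\gr H \cong \cT(W)$ as graded coalgebras for some graded space $W$. The first step is to identify $W$. Since passing to the associated graded of the coradical filtration does not change the space of primitives up to the canonical identification, and since for a cofree coalgebra one has $\calP(\cT(W))\cong W$, we obtain $W\cong\calP(\gr H)\cong\calP(H)=V$ as graded vector spaces. Thus at the level of graded coalgebras, $\gr H\cong\cT(V)$; what remains is to promote this to an isomorphism of \emph{Hopf} algebras, i.e. to match the multiplications.

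The key idea is to exploit commutativity of $\gr H$ together with the universal property of the deconcatenation coalgebra. First I would choose a graded linear splitting of the projection $\gr H \twoheadrightarrow V$ onto primitives, realizing $V$ as a subspace of primitive elements of $\gr H$. Because $\cT(V)$ with its shuffle product is the \emph{cofree} commutative connected Hopf algebra cogenerated by $V$ (equivalently, the free commutative algebra on $V$ is, under the PBW/CMM picture for the cocommutative dual, identified with the shuffle Hopf algebra), the inclusion $V\hookrightarrow\gr H$ extends uniquely to a morphism of graded coalgebras $\phi\colon\cT(V)\to\gr H$ that restricts to the identity on cogenerators. The central point is then that $\phi$ is automatically a morphism of \emph{algebras}: the shuffle product on $\cT(V)$ is the unique commutative product compatible with the deconcatenation coproduct and with the chosen primitives, so commutativity of $\gr H$ forces $\phi$ to intertwine the two multiplications. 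Finally, $\phi$ is a graded coalgebra morphism inducing an isomorphism on cogenerators $V$, hence an isomorphism of graded coalgebras, and therefore an isomorphism of graded Hopf algebras.

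The main obstacle I expect is the verification that the coalgebra morphism $\phi$ respects products, rather than merely the abstract counting of dimensions. It is easy to see that $\gr H$ and $\cT(V)$ have the same graded dimensions in each component (both have generating series determined by $V$ via the cofreeness), which already gives bijectivity of any coalgebra morphism restricting to the identity on $V$; the real content is compatibility with multiplication. Here the decisive input is Lemma~\ref{l:foissy}: commutativity of $\gr H$ is exactly what lets us invoke the universal property on the commutative side and conclude that the extension of $V\hookrightarrow\gr H$ built from the coalgebra structure is forced to agree with the shuffle product. An alternative, perhaps cleaner, route is to dualize: the graded dual $\tilH$ is cocommutative, so by CMM $\tilH\cong\calU(\frakg)$ with $\frakg=\calP(\tilH)\cong V^*$, and the cofreeness of $H$ translates into $\tilH$ being the tensor (free) Hopf algebra $\calT(V^*)$; then $\gr\tilH\cong\calT(V^*)$ as Hopf algebras by PBW applied to a free Lie situation, and dualizing back yields $\gr H\cong\cT(V)$ with the shuffle product. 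In either approach the load-bearing facts are precisely Lemma~\ref{l:foissy} and the (co)freeness hypothesis, and I would organize the write-up so that the algebra-compatibility step is isolated and handled by the appropriate universal property.
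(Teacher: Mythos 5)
The paper itself does not prove this lemma; it cites \cite[Prop.~1.5]{AguSot:2005}. Your main route, however, rests on a false uniqueness claim. It is not true that the shuffle product is the unique commutative product on the deconcatenation coalgebra $\cT(V)$ compatible with the coproduct ``and with the chosen primitives'' --- the primitive space of $\cT(V)$ is $V$ for \emph{any} compatible product, so that clause adds nothing. Quasi-shuffle products are counterexamples: any graded commutative multiplication $V\otimes V\to V$ yields a commutative, graded connected bialgebra structure on $\cT(V)$ with the deconcatenation coproduct that differs from the shuffle product whenever the multiplication on $V$ is nonzero. Concretely, $\qsym$ on its monomial basis is exactly such a structure, with $M_{(1)}M_{(1)}=2M_{(1,1)}+M_{(2)}$ rather than $2M_{(1,1)}$. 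So commutativity of $\gr H$ alone cannot force your coalgebra map $\phi$ to be multiplicative, and the ``universal property on the commutative side'' you invoke does not exist.

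The missing, load-bearing input is that the product of $\gr H$ is strictly additive in the coradical degree: $(H^{(i)}/H^{(i-1)})\cdot(H^{(j)}/H^{(j-1)})\subseteq H^{(i+j)}/H^{(i+j-1)}$. Under the coalgebra identification $\gr H\cong\cT(V)$ the coradical degree is the tensor length, so the product $\mu$ of $\gr H$ is length-homogeneous --- precisely the property that the lower-length correction terms of a quasi-shuffle violate. With this in hand, cofreeness does finish the argument: $\mu$ and the shuffle product are both coalgebra maps $\cT(V)\otimes\cT(V)\to\cT(V)$, hence each is determined by its corestriction to the cogenerators $V$; length-additivity kills that corestriction on $\ker(\varepsilon)\otimes\ker(\varepsilon)$, and unitality pins it down to $\pi\otimes\varepsilon+\varepsilon\otimes\pi$, so the two products coincide. (Commutativity then comes out as a conclusion; Lemma~\ref{l:foissy} is not actually needed.) Your dual route is also too quick: cofreeness gives that $\tilH=(\gr H)^*$ is free as an algebra, but identifying it with the tensor Hopf algebra $\calT(V^*)$ with primitive generators requires showing that an enveloping algebra that is free as an associative algebra is the enveloping algebra of a free Lie algebra, with free generators that can be chosen primitive --- and that is exactly where the work lies.
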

This result appears in \cite[Prop. 1.5]{AguSot:2005}.

\subsection{Palindromes and Lyndon words}\label{ss:pal-lyn}

A (weighted) \demph{alphabet} is a set that is graded by the positive integers and whose  homogeneous components are finite. The elements of degree $n$
 are called \demph{letters of weight} $n$. A \demph{word} is a sequence of letters in the alphabet and a \demph{palindrome} is a word 
that coincides with its reversal.
The \demph{length} of a word is the number of letters, and the \demph{weight} of a word is the sum 
of the weights of its letters.
 
Given an alphabet, let 
\[
\pal(m) \qand \nopal(m)
\] 
denote the number of palindromes of weight $m$, and the number of non-palindromic words
of weight $m$, respectively. Also, let
\[
\epal(m) \qand \opal(m)
\] 
denote the number of palindromes of weight $m$ of even and odd length, respectively,
and let 
\[
\pal(k,m)
\] 
denote the number of palindromes of length $k$ and weight $m$.

\smallskip

Assume now that a cofree graded connected Hopf algebra $H$ is given.
Thus, $H\cong\cT(V)$ as graded coalgebras, with $V=\calP(H)$.
Let
\[
v_n := \dim V_n
\]
denote the dimension of the space of homogeneous primitive elements of degree $n$.
Let $h_n$ and $g_n$ be as in Proposition~\ref{p:main}, so $(h_n)$ is the dimension
sequence of $H$ and $(g_n)$ is the dimension sequence of $\calP(\tilH)$.
Since $H\cong \cT(V)$ as graded vector spaces, we have
\begin{equation}\label{eq:geometric}
1+\sum_{m\geq 1} h_m t^m = \frac{1}{1-\sum_{n\geq 1} v_n t^n}.
\end{equation}
Together with~\eqref{eq:alt eul}, this yields
\begin{equation}\label{eq:Witt}
1- \sum_{n\geq 1} v_n t^n =  \prod_{i\geq1} \bigl(1-t^i\bigr)^{g_i}.
\end{equation}
In particular, the sequences $(h_n)$, $(g_n)$, and $(v_n)$ determine each other.

Fix a homogeneous basis of $V$, and let it be our alphabet. Thus,
there are $v_n$ letters of weight $n$. Equation~\eqref{eq:geometric} then says that
 $h_m$ is the total number of words of weight $m$. In particular,
\[
\nopal(m) = h_m - \epal(m) - \opal(m).
\] 
Equation~\eqref{eq:Witt} says that $g_i$ is the number of \emph{Lyndon words} of weight $i$ in the given alphabet. 
Then equation~\eqref{eq:eul} says that $\mul(k,m)$ counts the number of multisets of Lyndon words of cardinality $k$ and total weight $m$.
\emph{Witt's formula}~\cite[Thm. 2.2]{KanKim:1996} provides
an explicit formula for $(g_n)$ in terms of $(v_n)$:
\begin{equation}\label{eq:Witt2}
g_n = \sum_{d|n} \frac{ \mu(d)}{d} \sum_{\lambda\,\vdash\, n/d} \frac{\bigl(\ell(\lambda)-1\bigr)!}{\lambda!} v^\lambda,
\end{equation}
where $\mu(d)$ is the classical M\"obius function, the inner sum is over all partitions $\lambda=1^{k_1}2^{k_2}\cdots r^{k_r}$ of $n/d$, $\ell(\lambda)=k_1+k_2+\cdots+k_r$, $\lambda!=k_1! k_2!\cdots k_r!$, and 
\[
v^\lambda := v_1^{k_1} v_2^{k_2}\cdots v_r^{k_r}.
\]
(The special case of~\eqref{eq:Witt2} in which all letters are of weight $1$ appears in
 \cite[Cor. 5.3.5]{Lot:1997} and  \cite[Cor. 4.14]{Reu:1993}.) 

\subsection{Characteristic polynomial and trace of the antipode in the cofree case}\label{ss:char-cofree}

As an alternative to~\eqref{eq:char poly} and~\eqref{eq:trace-antipode},
we have the following expressions for the characteristic polynomial and trace of the antipode of a cofree graded connected Hopf algebra $H$. 

\begin{theorem}\label{t:pal}
For $H$ as above and any nonnegative integer $m$,
\begin{equation}\label{eq:S pal}
  \chi\bigl(\apode\big\vert_{H_m}\bigr)(x) = (x+1)^{\opal(m)}(x-1)^{\epal(m)}(x^2-1)^{\nopal(m)/2}.
\end{equation}
\end{theorem}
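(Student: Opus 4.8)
The plan is to reduce the computation to the associated graded Hopf algebra, where cofreeness makes the antipode completely explicit. Exactly as in the proof of Theorem~\ref{t:main}, since $\apode=\Psi_{-1}$ preserves both the grading and the coradical filtration, repeated application of Lemma~\ref{l:eigenvalues}(i) together with $\gr(\apode)=\apode$ (a special case of~\eqref{eq:gr-conv}, as noted there) gives
\[
\chi\bigl(\apode\big\vert_{H_m}\bigr) = \chi\bigl(\apode\big\vert_{(\gr H)_m}\bigr).
\]
At this point, rather than dualizing as in Theorem~\ref{t:main}, I would invoke Lemma~\ref{l:as05}: under the cofreeness hypothesis, $\gr H \cong \cT(V)$ as graded Hopf algebras, with $V=\calP(H)$ and the shuffle product. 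So it suffices to compute the characteristic polynomial of $\apode$ on the weight-$m$ component of the shuffle Hopf algebra $\cT(V)$.

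On $\cT(V)$ the antipode is given explicitly by~\eqref{eq:ant-shuffle}. Fixing a homogeneous basis of $V$ as alphabet (as in Section~\ref{ss:pal-lyn}), the words of weight $m$ form a basis of $\cT(V)_m$, and $\apode$ sends each basis word $w=x_1\cdots x_k$ to $(-1)^k\,\tilde w$, where $\tilde w=x_k\cdots x_1$ is the reversed word, of the same length $k$ and weight $m$. Thus word reversal is a fixed-point-free involution on the non-palindromic words and the identity on the palindromes, and $\apode$ acts as this signed involution permuting the basis.

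I would then decompose $\cT(V)_m$ into $\apode$-invariant blocks and multiply the characteristic polynomials of the blocks (again via Lemma~\ref{l:eigenvalues}(i), or just block-diagonality). A palindrome $w$ of length $k$ spans a $1$-dimensional invariant subspace on which $\apode$ acts by $(-1)^k$, contributing $(x-1)$ when $k$ is even and $(x+1)$ when $k$ is odd; this accounts for the factors $(x-1)^{\epal(m)}$ and $(x+1)^{\opal(m)}$. A non-palindromic $w$ of length $k$ spans, together with $\tilde w\neq w$, a $2$-dimensional invariant subspace on which $\apode$ has matrix $\left(\begin{smallmatrix} 0 & (-1)^k \\ (-1)^k & 0\end{smallmatrix}\right)$, whose characteristic polynomial is $x^2-1$. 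These blocks are indexed by the unordered pairs $\{w,\tilde w\}$, of which there are $\nopal(m)/2$, contributing $(x^2-1)^{\nopal(m)/2}$. Multiplying the three types of factors yields~\eqref{eq:S pal}.

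I expect the only real subtlety, rather than a genuine obstacle, to be the sign bookkeeping in the $2$-dimensional blocks: one checks that $\apode(\tilde w)=(-1)^k w$ (the same sign $(-1)^k$ as for $\apode(w)$, since $\tilde w$ also has length $k$), so the block is genuinely $\apode$-invariant and its characteristic polynomial collapses to $x^2-1=(x-1)(x+1)$ regardless of the parity of $k$. One should also record in passing that $\nopal(m)$ is even, which is automatic because reversal has no non-palindromic fixed points, so that the exponent $\nopal(m)/2$ is an integer.
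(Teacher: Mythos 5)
Your proposal is correct and follows essentially the same route as the paper: reduce to $\gr H$ as in Theorem~\ref{t:main}, apply Lemma~\ref{l:as05} to pass to the shuffle Hopf algebra $\cT(V)$, and read off the characteristic polynomial from the explicit antipode formula~\eqref{eq:ant-shuffle} via palindromic eigenvectors and $2\times 2$ blocks on reversal-pairs. Your extra remarks on the sign bookkeeping and the evenness of $\nopal(m)$ are accurate and merely make explicit what the paper leaves implicit.
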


\begin{proof}
By Lemma~\ref{l:as05}, $\gr H$ is isomorphic to the Hopf algebra $\cT(V)$, where
the latter is equipped with the shuffle product and the deconcatenation coproduct.
As in Theorem~\ref{t:main}, it suffices to analyze the antipode $\apode$ of this Hopf algebra.

Now, it follows from~\eqref{eq:ant-shuffle} that each palindrome yields an eigenvector of $\apode$.
The eigenvalue is $\pm 1$ according to the parity of the length. This explains the first two factors in~\eqref{eq:S pal}. The non-palindromic words pair up with their reversals and organize in $2\times 2$ blocks of the
form
\[
\pm\begin{pmatrix}
 0 &  1\\  1 & 0
\end{pmatrix}
\]
where the sign again depends on the parity of the length. This accounts for the remaining factor. 
\end{proof}

As an immediate consequence, we have:

\begin{corollary}\label{c:pal}
For $H$ as above and any nonnegative integer $m$,
\begin{equation}\label{eq:S trace}
  \tr\bigl(\apode\big\vert_{H_m}\bigr) = \epal(m) - \opal(m) = \sum_{k=0}^m (-1)^k\pal(k,m).
\end{equation}
\end{corollary}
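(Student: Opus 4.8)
The plan is to read off the trace directly from the characteristic polynomial already computed in Theorem~\ref{t:pal}, since the trace of a linear operator equals the negative of the coefficient of $x^{\dim-1}$ in its characteristic polynomial, or equivalently the sum of the eigenvalues counted with multiplicity. First I would recall from~\eqref{eq:S pal} that the antipode on $H_m$ has eigenvalue $+1$ with multiplicity $\epal(m)$ (from even-length palindromes), eigenvalue $-1$ with multiplicity $\opal(m)$ (from odd-length palindromes), and the $\nopal(m)/2$ non-palindromic pairs each contribute a $2\times 2$ block $\pm\bigl(\begin{smallmatrix} 0 & 1 \\ 1 & 0\end{smallmatrix}\bigr)$, whose trace is $0$. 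Summing the eigenvalues therefore gives
\[
\tr\bigl(\apode\big\vert_{H_m}\bigr) = (+1)\cdot\epal(m) + (-1)\cdot\opal(m) + 0 = \epal(m) - \opal(m),
\]
which establishes the first equality in~\eqref{eq:S trace}.

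For the second equality, I would decompose the count of even- and odd-length palindromes by length. Since $\epal(m)=\sum_{k \text{ even}}\pal(k,m)$ and $\opal(m)=\sum_{k \text{ odd}}\pal(k,m)$, the difference $\epal(m)-\opal(m)$ is precisely the alternating sum $\sum_{k=0}^m (-1)^k \pal(k,m)$. The upper limit $m$ is justified because a palindrome of length $k$ and weight $m$ requires $k\le m$ (each letter has weight at least $1$), so $\pal(k,m)=0$ for $k>m$; this is the analogue of the vanishing in~\eqref{eq:eul0}. No further computation is needed, as both equalities are immediate bookkeeping once the eigenvalue multiplicities from Theorem~\ref{t:pal} are in hand.

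Since there is essentially no obstacle here, the only point requiring a moment's care is confirming that the $2\times2$ blocks contribute nothing to the trace: each such block, whether it carries a $+$ or a $-$ sign, has zero diagonal, hence trace zero, so the non-palindromic words are invisible to the trace even though they occupy a positive-dimensional subspace. This is exactly why the trace sees only the palindromic contributions, in contrast with the characteristic polynomial, which retains the $(x^2-1)^{\nopal(m)/2}$ factor. I would present the corollary as a one- or two-line consequence of Theorem~\ref{t:pal}, emphasizing this cancellation as the conceptual content.
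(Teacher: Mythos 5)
Your proposal is correct and matches the paper's treatment: the corollary is stated there as an immediate consequence of Theorem~\ref{t:pal}, obtained exactly as you do by summing eigenvalues with multiplicity, with the non-palindromic $2\times 2$ blocks contributing zero to the trace and the second equality being the length-decomposition $\epal(m)-\opal(m)=\sum_{k}(-1)^k\pal(k,m)$. No gaps.
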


Since there are no palindromes of even length and odd weight, \eqref{eq:S pal} and \eqref{eq:S trace} imply
\[
 \chi\bigl(\apode\big\vert_{H_m}\bigr)(x) = (x+1)^{\pal(m)} (x^2-1)^{\nopal(m)/2}
\]
and
\[
\tr\bigl(\apode\big\vert_{H_m}\bigr) =  - \pal(m)
\]
for all odd $m$.

\begin{example}\label{eg:geometric2}
Let $V$ be an $r$-dimensional vector space. We view it as a graded vector space
concentrated in degree $1$ and consider the Hopf algebra $\cT(V)$. Our alphabet
consists of $r$ letters of weight $1$, and the palindrome distribution is
\[
\pal(k,m) = \begin{cases}
r^k & \text{ if $m=2k$,} \\
r^{k+1}         & \text{ if $m=2k+1$,}\\
0 & \text{ otherwise.}
\end{cases}
\]
It follows from~\eqref{eq:S trace} that
\[
\tr\bigl(\apode\big\vert_{H_m}\bigr)= 
\begin{cases}
r^{m/2} & \text{ if $m$ is even,} \\
-r^{(m+1)/2}         & \text{ if $m$ is odd.}
\end{cases}
\]
We arrived at the same conclusion by different means in Example~\ref{eg:geometric}.
\end{example}

We return to the general discussion. {}From~\eqref{eq:trace-antipode} and~\eqref{eq:S trace} we deduce
\begin{equation}\label{eq:pal-eul}
  \epal(m) - \opal(m)  =  \emul(m) - \omul(m),
\end{equation}
or equivalently,
\begin{equation}\label{eq:pal-eul2}
  \sum_{k=0}^m (-1)^k\pal(k,m)  =  \sum_{k=0}^m (-1)^k\mul(k,m).
\end{equation}
In general, the pairs $(\epal,\opal)$ and $(\emul,\omul)$,
as well as the triangular arrays $\pal$ and $\mul$, are different.

\begin{example}\label{eg:MR2}
Consider again the Malvenuto--Reutenauer Hopf algebra $\ssym$.  
We compare the integers $\mul(k,m)$ and $\pal(k,m)$ for low values of $k$ and $m$. 

$\ssym$ is cofree and the relevant alphabet is the set of permutations with no global descents; see \cite[Cor. 6.3]{AguSot:2005}. 
On the component of degree $m=3$, we have the following distribution of palindromes.
\begin{center}
\begin{tabular}{l@{\quad}|@{\quad}c@{\qquad}c@{\qquad}c}
length ($k$) & 1  &  2  &  3  \\
\hline
permutations & $123, 132, 213$  &  $231,312$  &  $321$ \\
words on alphabet  & $123, 132, 213$  &  $12\cmrg1, 1\cmrg12$  &  $1\cmrg1\cmrg1$ \\
\hline
$\pal(k,3)$ & 3 & 0 & 1
\end{tabular}
\end{center}
Beneath each permutation, we recorded its expression as words in the alphabet. 
Counting those words that are palindromic we obtained the integers $\pal(k,3)$.

The integer $v_m$ is the number of permutations of $m$ elements with no global descents. 
The integers $(g_m)$ are calculated from either \eqref{eq:alt eul}
or~\eqref{eq:Witt}. The first few values are as follows.
\begin{center}
\begin{tabular}{c@{\quad}|@{\quad}c@{\quad}c@{\quad}c@{\quad}c@{\quad}c@{\quad}c}
$m$ & 1  &  2  &  3  & 4 & 5 & 6\\
\hline
$v_m$ & $1$  &  $1$  &  $3$ & $13$ &  $71$ & $461$\\
$g_m$  & $1$  &  $1$  &  $4$  & $17$ & $92$ & $572$ \\
\end{tabular}
\end{center}
The sequences $(v_m)$ and $(g_m)$ are  A003319 and A112354 in \cite{oeis}, respectively.
Finally, the integers $\mul(k,m)$ are computed from \eqref{eq:eul}. 
For $m=3$ we find the following.
\begin{center}
\begin{tabular}{c@{\quad}|@{\quad}c@{\quad}c@{\quad}c}
$k$ & 1  &  2  &  3  \\
\hline
$\mul(k,3)$ & $4$  &  $1$  &  $1$
\end{tabular}
\end{center}

Beyond $m=3$, the integers $\mul(k,m)$ and $\pal(k,m)$ differ more drastically; see Figure \ref{f:triangles}. However, the alternating sum of the entries in each column is the same for both arrays,
as predicted by~\eqref{eq:pal-eul2}.
\begin{figure}[htb]
\centering
\subfigure[The array $\mul(k,m)$.]{
\begin{tabular}[c]{@{\,}c|c@{\ \ \ }c@{\ \ \ }c@{\ \ \ }c@{\ \ \ }c@{\ \ \ }c@{\ \ \ }c@{\,}}
	$k$ $\backslash$ $m$ & 1 & 2 & 3 & 4 & 5 & 6 \\
	\hline
	1 & 1 & 1 & 4 & 17 & 92 & 572 \\
	2 & & 1 & 1 & 5 & 21 & 119 \\
	3 &&& 1 & 1 & 5 & 22 \\
	4 &&&& 1 & 1  & 5 \\
	5 &&&&& 1 & 1 \\
	6 &&&&&& 1 \\
	\end{tabular}
}
\quad\quad
\subfigure[The array $\pal(k,m)$.]{
\begin{tabular}[c]{@{\,}c|c@{\ \ \ }c@{\ \ \ }c@{\ \ \ }c@{\ \ \ }c@{\ \ \ }c@{\ \ \ }c@{\,}}
	$k$ $\backslash$ $m$ & 1 & 2 & 3 & 4 & 5 & 6 \\
	\hline 
	1 & 1 & 1 & 3 & 13 & 71  & 461 \\
	2 &  & 1 & 0 & 1 & 0 & 3 \\
	3 &  &  & 1 & 1 & 4 & 14  \\
	4 &  &  &  & 1 & 0 & 2   \\
	5 &  &  &  &  & 1 & 1  \\
	6 &  &  &  &  &  & 1 \\
	\end{tabular}
}
\caption{The arrays $\mul(k,m)$ and $\pal(k,m)$ for $\ssym$.}
\label{f:triangles}
\end{figure} 
\end{example}

\subsection{Generating functions}\label{ss:genfun}

We continue to employ the notation of Section~\ref{ss:pal-lyn}. Let
\[
v(t) := \sum_{n\geq 1} v_n t^n
\]
be the generating function for the dimension sequence of $V$.

We have the following generating functions for even and odd palindromes.
The functions are bivariate to account for length and weight.
 
\begin{proposition}\label{p:genfun-pal}
\begin{align}
\label{eq:genfun-epal}
\sum_{k,m\geq 0} \pal(2k,m)\,s^k t^m & = \frac{1}{1-s v(t^2)},\\
\label{eq:genfun-opal}
\sum_{k,m\geq 0} \pal(2k+1,m)\,s^k t^m & = \frac{v(t)}{1-s v(t^2)}.
\end{align}
\end{proposition}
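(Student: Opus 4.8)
The plan is to enumerate palindromes by recording each one through its ``first half,'' exploiting that a palindrome is completely determined by its initial segment (together with a central letter in the odd case). A palindrome of even length $2k$ has the form $x_1\cdots x_k\,x_k\cdots x_1$, so it is in bijection with the arbitrary word $x_1\cdots x_k$ of length $k$, and its weight is $2\bigl(\deg x_1+\cdots+\deg x_k\bigr)$, i.e.\ twice the weight of the initial segment. A palindrome of odd length $2k+1$ has the form $x_1\cdots x_k\,y\,x_k\cdots x_1$ and is in bijection with a word $x_1\cdots x_k$ of length $k$ together with a single central letter $y$; its weight is $2\bigl(\deg x_1+\cdots+\deg x_k\bigr)+\deg y$.

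I would first record the weight generating function for arbitrary words. Since there are $v_n$ letters of weight $n$, each letter contributes a factor $v(t)$, and hence the generating function for words of length $k$, with $t$ marking weight, is $v(t)^k$. Because the initial segment of a palindrome contributes \emph{twice} its weight, the substitution $t\mapsto t^2$ captures this doubling: the weight generating function for even palindromes of fixed half-length $k$ is $\sum_m\pal(2k,m)\,t^m = v(t^2)^k$, while for odd palindromes it is $\sum_m\pal(2k+1,m)\,t^m = v(t^2)^k\,v(t)$, the extra factor $v(t)$ accounting for the free choice of central letter.

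It then remains to sum over $k$, recording the half-length $k$ by the variable $s$. Summing the geometric series
\[
\sum_{k\geq 0} s^k\, v(t^2)^k = \frac{1}{1-s\,v(t^2)}
\]
yields~\eqref{eq:genfun-epal}, and the odd case produces the same factor multiplied by $v(t)$, giving~\eqref{eq:genfun-opal}.

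No genuine difficulty arises; the only point demanding care is the weight bookkeeping, namely recognizing that the initial segment is counted with doubled weight (hence $v(t^2)$), whereas the central letter of an odd palindrome is counted with its plain weight (hence the lone factor $v(t)$). One should also check that the degenerate cases $k=0$ behave correctly: the empty word is the unique even palindrome of half-length $0$, contributing the constant term $1$, and the single letters are exactly the odd palindromes of half-length $0$, contributing $v(t)$.
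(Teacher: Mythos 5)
Your proof is correct and is essentially the paper's argument: the paper peels off the outer letter pair to get the recursion $\pal(2k,m)=\sum_{a\geq 1} v_a\,\pal(2k-2,m-2a)$, which is just the one-step version of your direct bijection between an even palindrome and its first half (plus a central letter in the odd case). The resulting generating-function manipulation, including the geometric series in $s\,v(t^2)$ and the extra factor $v(t)$ for the central letter, is identical.
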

\begin{proof}
Consider a palindrome of even length $2k$. Removing the first and last letters (which are equal), yields a palindrome of length $2k-2$. The weights of the two palindromes differ by twice the weight of this letter. Therefore,
\[
\pal(2k,m) = v_1\pal(2k-2,m-2) + v_2\pal(2k-2,m-4)+\cdots.
\]
This recursion leads at once to~\eqref{eq:genfun-epal}. A similar argument establishes~\eqref{eq:genfun-opal}.
\end{proof}

\begin{corollary}\label{c:genfun-trace}
\begin{equation}\label{eq:genfun-trace}
\sum_{m\geq 0} \tr\bigl(\apode\big\vert_{H_m}\bigr)\, t^m = \frac{1-v(t)}{1-v(t^2)}.
\end{equation}
\end{corollary}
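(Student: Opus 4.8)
The plan is to combine Corollary~\ref{c:pal}, which expresses the trace of the antipode as an alternating sum over palindromes organized by length, with the bivariate generating functions of Proposition~\ref{p:genfun-pal}, specialized at $s=1$.

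First I would recall from Corollary~\ref{c:pal} that $\tr\bigl(\apode\big\vert_{H_m}\bigr) = \epal(m) - \opal(m)$, and note that by definition $\epal(m)=\sum_{k\geq 0}\pal(2k,m)$ counts the palindromes of even length while $\opal(m)=\sum_{k\geq 0}\pal(2k+1,m)$ counts those of odd length. Summing against $t^m$ then splits the trace generating function into an even part and an odd part:
\[
\sum_{m\geq 0}\tr\bigl(\apode\big\vert_{H_m}\bigr)\,t^m = \sum_{m\geq 0}\epal(m)\,t^m - \sum_{m\geq 0}\opal(m)\,t^m.
\]

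Next I would specialize the two identities of Proposition~\ref{p:genfun-pal} at $s=1$. Setting $s=1$ in~\eqref{eq:genfun-epal} collapses its left-hand side to $\sum_{m}\epal(m)\,t^m$ and produces $1/(1-v(t^2))$, while setting $s=1$ in~\eqref{eq:genfun-opal} collapses its left-hand side to $\sum_{m}\opal(m)\,t^m$ and produces $v(t)/(1-v(t^2))$. Subtracting the second from the first gives
\[
\sum_{m\geq 0}\tr\bigl(\apode\big\vert_{H_m}\bigr)\,t^m = \frac{1}{1-v(t^2)} - \frac{v(t)}{1-v(t^2)} = \frac{1-v(t)}{1-v(t^2)},
\]
which is the desired identity.

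There is essentially no serious obstacle here; the only point that requires care is the sign bookkeeping inherited from Corollary~\ref{c:pal}, namely that a palindrome of length $k$ contributes with sign $(-1)^k$, so that even-length palindromes enter with $+1$ and odd-length palindromes with $-1$, precisely matching the even/odd split after the specialization $s=1$. As a consistency check, I note that the result also follows from the general formula of Corollary~\ref{c:trace-antipode-gen}: by~\eqref{eq:geometric} one has $h(t)=1/(1-v(t))$, hence $h(t^2)=1/(1-v(t^2))$, and therefore $h(t^2)/h(t)=(1-v(t))/(1-v(t^2))$, in agreement with the computation above.
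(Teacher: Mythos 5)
Your proof is correct and follows exactly the paper's argument: subtract~\eqref{eq:genfun-opal} from~\eqref{eq:genfun-epal}, set $s=1$, and invoke~\eqref{eq:S trace}. Your closing consistency check via Corollary~\ref{c:trace-antipode-gen} and~\eqref{eq:geometric} is also precisely the observation made in the remark following this corollary in the paper.
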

\begin{proof}
This follows by subtracting~\eqref{eq:genfun-opal} from~\eqref{eq:genfun-epal}, letting $s=1$,
and employing~\eqref{eq:S trace}.
\end{proof}

\begin{remark}
Corollary~\ref{c:genfun-trace} is a special case of Corollary~\ref{c:trace-antipode-gen},
in view of~\eqref{eq:geometric}. The above may be regarded as a \emph{semicombinatorial} proof of this result, which is possible under the cofreeness assumption.
\end{remark}

\section{Examples}\label{s:apps}

We carry out explicit calculations for the Hopf algebra of symmetric functions and a few related Hopf algebras, focusing on the trace of the antipode. They offer no difficulty,
as explicit formulas for the antipodes of these Hopf algebras are known. Our purpose here is simply to illustrate some of the results from the preceding sections.

The paper \cite{ABS:2006} contains a concise description of each of the Hopf algebras
discussed in this section. Other references are given as we proceed.

\subsection{Symmetric functions}\label{ss:sym}

Consider the Hopf algebra of symmetric functions $H = \sym$. See \cite[Ch.I]{Mac:1995} for the results used below. 
On the basis of Schur functions, the antipode acts by  
$\apode(s_{\lambda}) = (-1)^{\abs{\lambda}}s_{\lambda'}$, where $\lambda'$ is the partition conjugate to $\lambda$.
Therefore,
\begin{equation}\label{eq:apode-sym}
\tr\bigl(\apode\big\vert_{H_m}\bigr) = (-1)^m c(m),
\end{equation}
where $c(m)$ is the number of self-conjugate partitions of $m$. 

We turn to Corollary~\ref{c:trace-power}.
For this Hopf algebra, $g_i=1$ for all $i\geq1$. 
Hence $ \binom{\underline{g}}{\lambda} = 1$ for all $\lambda$, and $\mul(k,m) = p_k(m)$, the number of partitions of $m$ into $k$ parts. {}From~\eqref{eq:trace-antipode} we deduce
\begin{equation}\label{eq:self-conj}
(-1)^m c(m)   = \sum_{k=0}^m (-1)^k p_k(m).
\end{equation}
(Note that $p_0(m)=0$ for $m>0$.) 
The number of odd parts in a partition of $m$ has the same parity as $m$. Hence, the previous identity is equivalent to
\begin{equation}\label{eq:self-conj2}
c(m) = e(m) - o(m),
\end{equation}
where $e(m)$ and $o(m)$ denote the number of partitions of $m$ with an even number
of even parts, and with an odd number of even parts, respectively.
This identity appears in~\cite[Ex. 1.60]{Aig:2007} and \cite[Ch. 1, Ex. 22.b]{Sta:2012}.

It is possible to obtain this result more directly as follows.
Consider the power sum basis of $\sym$. Since $\apode(p_\lambda) = (-1)^{\ell(\lambda)}p_\lambda$, we have  
\begin{equation}\label{eq:apode-sym2}
\tr(\left.\apode\right\vert_{H_m}) = p_e(m) - p_o(m),
\end{equation}
where $p_e(m)$ and $p_o(m)$ are the number of partitions of $m$ of even length
and of odd length, respectively.
Equating~\eqref{eq:apode-sym} and~\eqref{eq:apode-sym2} gives~\eqref{eq:self-conj} again.

We further illustrate Corollary~\ref{c:trace-power} by deriving certain
identities involving the Littlewood--Richardson coefficients $c_{\mu,\nu}^{\,\lambda}$.
Recall that the latter are the structure constants for both the product and coproduct 
on the Schur basis of $\sym$:
\[
	 s_\mu \cdot s_\nu = \sum_\lambda c_{\mu,\nu}^{\,\lambda} \, s_\lambda 
	 \quad\qand\quad
	 \Delta(s_\lambda)  = \sum_{\mu,\nu} c_{\mu,\nu}^{\,\lambda} \, s_\mu\otimes s_\nu.
\] 
Formula~\eqref{eq:trace-power} (with $n=\pm 2$) yields the following identities,
for all $m\geq 1$:
\begin{gather}
\sum_{\lambda,\mu,\nu \vdash m} (c_{\mu,\nu}^{\,\lambda})^2 \ = \
\sum_{k=1}^m \,2^k\, p_k(m)
\end{gather}
and
\begin{gather}
 \sum_{\lambda,\mu,\nu \vdash m} c_{\mu,\nu}^{\,\lambda}c_{\mu',\nu'}^{\,\lambda} \ = \ 
 \sum_{k=1}^m \,(-1)^{m-k}\, 2^k\, p_k(m).
\end{gather}
 Incidentally, the fact that the antipode preserves (co)products says that $c_{\mu,\nu}^{\,\lambda}=c_{\mu',\nu'}^{\,\lambda'}$.

\subsection{Schur $P$-functions}\label{ss:oddsym}

A partition of an integer is \demph{strict} if its parts are all distinct.
It is \demph{odd} if each of its parts is odd.

Let $\lambda$ be a strict  partition and $P_\lambda\in\sym$ 
the corresponding \emph{Schur $P$-function}, as in \cite[III.8]{Mac:1995}.
Let $H$ be the subspace of $\sym$ spanned by the $P_\lambda$, as $\lambda$
runs over all strict partitions. Then $H$ is a Hopf subalgebra of $\sym$. We have
\begin{equation}\label{eq:apode-strict}
	\apode(P_\lambda) = (-1)^{\abs{\lambda}} P_\lambda.
\end{equation}
Therefore, 
\[
	\tr(\left.\apode\right\vert_{H_m}) = (-1)^m p_d(m),
\]
where $p_d(m)$ is the number of strict partitions of $m$. 

It is known that $H$ is the subalgebra of $\sym$ 
generated by the odd power sums $p_{2i+1}$, $i\geq 0$. Therefore,
\begin{equation}\label{eq:apode-odd}
\tr(\left.\apode\right\vert_{H_m}) = (-1)^m p_o(m),
\end{equation}
where $p_o(m)$ is the number of odd partitions of $m$. Equating~\eqref{eq:apode-strict}
and~\eqref{eq:apode-odd} recovers the classical fact that odd and strict partitions are equinumerous~\cite[Prop. 1.8.5]{Sta:2012}.

Regarding the quantities in Proposition~\ref{p:main}, we have that $g_i=1$ if $i$ is odd
and $0$ otherwise.
It follows that $\binom{\underline{g}}{\lambda} = 1$ when $\lambda$ is 
odd and $\binom{\underline{g}}{\lambda} = 0$ otherwise. 
Therefore, $\mul(k,m)$ is the number of 
odd partitions of $m$ of length $k$. In an odd partition, the parities of $m$ and $k$
are the same. Thus, identity~\eqref{eq:trace-antipode} simply counts
odd partitions according to their length.

\subsection{Quasisymmetric functions}\label{ss:qsym}

Let us turn to the Hopf algebra $H=\qsym$ of quasisymmetric functions.
Consider the \emph{fundamental} and \emph{monomial}
quasisymmetric functions, denoted by $F_\alpha$ and 
 $M_\alpha$, respectively. As $\alpha$ runs over the compositions of $m$,
 both $\{F_\alpha\}$ and $\{M_\alpha\}$ constitute bases of $H_m$. 
 
 The antipode has the following descriptions:
\[
	\apode(F_\alpha) = (-1)^m F_{\rev{\alpha}'} 
	\qand 
	\apode(M_\alpha) = (-1)^{\length{\alpha}} \sum_{\beta \leq \alpha} M_{\rev{\beta}},
\]
where $\rev \gamma$ is the reversal of $\gamma$, $\gamma'$ is its conjugate 
(obtained by reflecting the ribbon diagram of $\gamma$ across the main diagonal), and $\leq$ is the refinement partial order on compositions. 
Note that $\alpha = \rev{\alpha}'$ if and only if $\alpha$ is symmetric with respect to reflection across the anti-diagonal. There are precisely $2^{(m-1)/2}$ of these when $m$ is odd, and zero when $m$ is even. Calculating the trace on the fundamental basis 
we thus obtain
\begin{gather}\label{eq:qsym}
\tr(\left.\apode\right\vert_{H_m}) = \begin{cases}
-2^{(m-1)/2} & \text{ if $m$ is odd,} \\
  0       & \text{ otherwise.}
\end{cases}
\end{gather}
The compositions $\alpha$ that contribute to the trace on the monomial basis
satisfy $\rev{\alpha}\leq\alpha$. Since reversal is an order-preserving involution,
this happens if and only if $\rev{\alpha}=\alpha$, that is, 
when $\alpha$ is palindromic. Let $\pal(m)$
 denote the number of palindromic compositions of $m$.
 If $m$ is even, exactly half of the palindromic compositions of $m$ have odd length; 
 if $m$ is odd, all of them do. We conclude that 
\begin{gather}\label{eq:qsym2}
\tr(\left.\apode\right\vert_{H_m}) = \begin{cases}
-\pal(m) & \text{ if $m$ is odd,} \\
  0       & \text{ otherwise.}
\end{cases}
\end{gather}
One may arrive at the same identity from~\eqref{eq:S trace}.

Equating~\eqref{eq:qsym} and~\eqref{eq:qsym2} we deduce that, for all odd $m$,
\[
 \pal(m) = 2^{(m-1)/2}.
\]
It is easy to give a direct proof of this fact (and of $\pal(m) = 2^{\lfloor m/2\rfloor}$ for all $m\geq0$).

$\qsym$ is cofree, so Theorem~\ref{t:pal} applies. The space of
primitive elements is spanned by the monomials $M_{(n)}$, for $n\geq 1$.
One finds that 
\[
	\pal(k,m) = \begin{cases}
		\displaystyle\binom{\lceil m/2 \rceil-1}{\lceil k/2\rceil-1} &\hbox{if $m$ is even, or if $m$ is odd and $k$ is odd}, \\[1.5ex]
		\hskip2.5em 0 &\hbox{if $m$ is odd and $k$ is even}.
		\end{cases}
\]
Formula~\eqref{eq:S trace} boils down in this case to the basic identities $2^h=\sum_{j=0}^h \binom{h}{j}$ and $0^h = \sum_{j=0}^h (-1)^j\binom{h}{j}$. 

Regarding the quantities in Proposition~\ref{p:main}, we have by~\eqref{eq:Witt} that $g_i$ is the
number of Lyndon words of weight $i$ in an alphabet with one letter of weight $n$ for each $n\geq 1$. This number is given by $g_1=1$ and
\[
g_i = \frac{1}{i} \sum_{d|i} \mu(d)\,2^{i/d}
\]
for $i\geq 2$~\cite[Prop. 2.3]{KanKim:1996}.

\subsection{Peak quasisymmetric functions}\label{ss:peakqsym}

Let $H$ be the Hopf algebra of \emph{peak} quasisymmetric functions \cite{Ste:1997}. It is 
a Hopf subalgebra of $\qsym$, with a basis $\theta_\alpha$ indexed by compositions $\alpha$ into odd parts (\demph{odd compositions}). The number of odd compositions of $m$ is the Fibonacci number $f_m$ (with $f_0=f_{1}=f_2=1$).

A formula for the antipode of $H$ is given in \cite{BHvW:2003}: 
\[
	\apode(\theta_\alpha) = (-1)^{m}\theta_{\rev \alpha}. 
\]
It follows that 
\begin{equation}\label{eq:apode-peak}
	\tr(\left.\apode\right\vert_{H_m}) = \begin{cases} 
		f_{m/2}, & \hbox{if $m$ is even,} \\ 
		-f_{\lceil m/2 \rceil +1},& \hbox{if $m$ is odd,}
		\end{cases}
\end{equation}
(as palindromic odd compositions of $m$ arise from odd compositions of $m/2$). 
One may arrive at the same identity from~\eqref{eq:S trace}.

As for $\qsym$, $H$ is cofree. There is one primitive element of degree $n$ for
each odd $n$. One finds that 
\[
	\pal(k,m) = \begin{cases}
		\displaystyle \hskip.25em \binom{(m+k)/4-1}{(m-k)/4} & \hbox{if $m$ is even and $4 \mid (m-k)$,} \\[2ex]
		\displaystyle \binom{\lfloor(m+k-1)/4\rfloor}{\lfloor(m-k+1)/4\rfloor} & \hbox{if $m$ and $k$ are odd,} \\[2ex] 
		\hskip3.5em 0 & \hbox{otherwise.}
		\end{cases}
\]
Information about these numbers can be found in \cite[A046854 and A168561]{oeis}.

Formula~\eqref{eq:S trace} yields the following basic identities:
\[
f_h = \sum_{j=0}^{\lfloor h/2\rfloor} \binom{h-j-1}{j}  \ \hbox{for $h\geq 0$} \ \ \qand \ \ 
f_h = \sum_{j=0}^{h-2} \binom{\lfloor\frac{h+j}{2} \rfloor-1}{\lfloor\frac{h-j}{2}\rfloor-1} \ \hbox{for $h\geq 2$.}
\]

\appendix
\section{Hopf monoids in species}\label{s:species}

The results from the earlier sections admit variants for Hopf monoids in species.
We list the main ones in this section, along with indications for the proofs,
which are similar to the ones for graded connected Hopf algebras.

This section assumes familiarity with the notion of Hopf monoid in species,
as developed in~\cite{AguMah:2010,AguMah:2013}
and with the notation employed there. For the most part, the latter (shorter)
reference suffices. The antipode is discussed in~\cite[\S 5]{AguMah:2013}
and the Adams operators (convolution powers of the identity)
appear in~\cite[\S 14.4]{AguMah:2013}.

All Hopf monoids $\tH$ are assumed to be connected and finite-dimensional.
That is, $\tH[\emptyset]$ is one-dimensional, and for each finite set $I$, the vector space $\tH[I]$ is finite-dimensional.

\subsection{Characteristic polynomial}\label{ss:char-sp}

The starting point is the following result, whose proof is similar to that of~\cite[Prop.~1.6]{AguSot:2005a}.

\begin{lemma}\label{l:foissy-sp}
The Hopf monoid $\gr\tH$ (associated to the coradical filtration of $\tH$) 
is commutative.
\end{lemma}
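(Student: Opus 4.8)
The plan is to transcribe the direct proof of the graded connected case---Lemma~\ref{l:foissy}, i.e.\ \cite[Prop.~1.6]{AguSot:2005a}---into the language of species. Commutativity of $\gr\tH$ means precisely that, for homogeneous $x\in\tH^{(p)}[S]$ and $y\in\tH^{(q)}[T]$ with $S\cap T=\emptyset$, the element
\[
[x,y]:=\mu_{S,T}(x\otimes y)-\mu_{T,S}(y\otimes x)\in\tH^{(p+q)}[S\sqcup T]
\]
lies in $\tH^{(p+q-1)}[S\sqcup T]$, so that it vanishes in the associated graded. Thus it suffices to prove this single claim about commutators.

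First I would record the two standard facts about the coradical filtration that make the induction run, both valid for connected Hopf monoids exactly as for connected Hopf algebras. The structure maps are filtration-preserving, so in particular $\mu\bigl(\tH^{(a)}\otimes\tH^{(b)}\bigr)\subseteq\tH^{(a+b)}$; and the filtration is recovered from the reduced coproduct $\Delta_+$ by a wedge recursion, so that $z$ lies in $\tH^{(m)}$ as soon as $\Delta_+(z)$ lands in $\tH^{(m-1)}\otimes(\text{positive part})$. Since $[x,y]$ lies in the augmentation ideal (its counit vanishes), I may assume $x,y$ are in the positive part and apply these characterizations to the commutator.

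The heart of the argument is an induction on $p+q$. The base cases $p=0$ or $q=0$ are immediate, since then one factor is a scalar and the commutator vanishes. For the inductive step I would expand $\Delta_+\bigl(\mu_{S,T}(x\otimes y)\bigr)$ by the bialgebra compatibility (the co-Leibniz formula): beyond the symmetric term $x\otimes y+y\otimes x$, every summand carries a tensor factor that is a product involving $\Delta_+ x$ or $\Delta_+ y$. Subtracting the analogous expansion of $\Delta_+\bigl(\mu_{T,S}(y\otimes x)\bigr)$ cancels the symmetric term and leaves a sum in which each summand has one tensor slot occupied by a commutator $[x',y']$ of elements whose degrees sum to strictly less than $p+q$. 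By the induction hypothesis each such commutator drops one filtration degree; combined with the fact that products and single factors respect the filtration, I would check that in every summand the first tensor factor lies in $\tH^{(p+q-2)}$. Hence $\Delta_+([x,y])$ lands in $\tH^{(p+q-2)}\otimes(\text{positive part})$, and the wedge recursion yields $[x,y]\in\tH^{(p+q-1)}$, completing the induction.

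I expect the main obstacle to be purely bookkeeping: the co-Leibniz expansion of $\Delta_+\circ\mu$ in species is governed by the Hopf-monoid compatibility axiom, which for each decomposition $S\sqcup T=A\sqcup B$ sums over the ways $A$ and $B$ meet $S$ and $T$. Organizing these summands so that the symmetric part is visibly cancelled and the remainder is seen as a sum of lower-degree commutators, each tensored with a filtration-bounded factor, is more intricate than in the linear setting. The algebraic mechanism, however---symmetric terms cancel and the rest are handled by induction---is identical to the Hopf-algebra case.
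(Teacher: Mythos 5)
Your proposal is correct and follows essentially the same route as the paper, which simply states that the proof is similar to that of \cite[Prop.~1.6]{AguSot:2005a} --- exactly the direct commutator-induction argument you transcribe into species. The one point worth being careful about, which you flag, is the bookkeeping in the co-Leibniz expansion over decompositions $S\sqcup T=A\sqcup B$, but the mechanism (cancellation of the symmetric term, induction on total filtration degree via the wedge characterization of the coradical filtration) carries over verbatim.
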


Let $h_m:=\dim\tH[m]$, and let
\[
\egfh(t) := 1+ \sum_{m\geq 1} h_m\, \frac{t^m}{m!}
\]
denote the exponential generating function for the dimension sequence of $\tH$.

\begin{theorem}\label{t:main-sp}
For every scalar $n$ and finite set $I$, the characteristic polynomial of 
the restriction $\Psi_n\big\vert_{\tH[I]}$ of the $n$-th Adams operator is of the form
\begin{equation}\label{eq:char poly-sp}
	\chi\bigl(\Psi_n\big\vert_{\tH[I]}\bigr)(x) = \prod_{k=0}^m(x-n^k)^{\expmul(k,m)},
\end{equation} 
where $m=\abs{I}$. The nonnegative integers $\expmul(k,m)$ are independent of $n$, and are determined
by the dimension sequence of $\tH$, as follows:
\begin{equation}\label{eq:sym-gen-sp}
\sum_{k,m\geq 0} \expmul(k,m)\, s^k \frac{t^m}{m!} = \egfh(t)^s.
\end{equation}
\end{theorem}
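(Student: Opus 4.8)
The plan is to mirror the proof of Theorem~\ref{t:main} step by step, replacing graded vector spaces by species and ordinary generating functions by exponential ones. Since $\Psi_n$ is a morphism of species, its restriction to $\tH[I]$ has a characteristic polynomial depending only on $m=\abs I$, so the statement is well posed. First I would reduce to the case of a nonnegative integer $n$: both sides of~\eqref{eq:char poly-sp} depend polynomially on $n$ (the left-hand side by the species analog of~\eqref{eq:adams-pol-graded}, which writes $\Psi_n\big\vert_{\tH[I]}$ as a polynomial in $n$ with operator coefficients), so it suffices to match them on the integers. Next, because $\Psi_n$ preserves the coradical filtration, it respects the induced filtration on the finite-dimensional space $\tH[I]$; repeated use of Lemma~\ref{l:eigenvalues}(i), together with the identity $\gr(\Psi_n)=\Psi_n$ coming from functoriality of convolution, lets me replace $\tH$ by $\gr{\tH}$. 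By Lemma~\ref{l:foissy-sp} this is commutative, and dualizing (invoking Lemma~\ref{l:eigenvalues}(ii) and the fact that $T\mapsto T^{*}$ is an isomorphism of convolution algebras) I pass to its graded dual, a connected \emph{cocommutative} Hopf monoid.

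On such a Hopf monoid the species forms of the Cartier--Milnor--Moore and Poincar\'e--Birkhoff--Witt theorems apply~\cite{AguMah:2010}: it is the universal enveloping monoid $\calU(\frakg)$ of its Lie monoid of primitives $\frakg$, and its associated graded is the free commutative monoid $\calS(\frakg)$. As in Theorem~\ref{t:main}, this reduces the computation to $\calS(\frakg)$. Being cocommutative, $\calS(\frakg)$ carries the Eulerian idempotents~\cite[\S14]{AguMah:2013}, and $\Eul k$ is the projection onto the $k$-th symmetric power $\calS^k(\frakg)$, on which $\Psi_n$ acts by the scalar $n^k$. This establishes~\eqref{eq:char poly-sp} with
\[
\expmul(k,m)=\dim\calS^k(\frakg)[I],\qquad \abs I=m.
\]

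It remains to identify these multiplicities through~\eqref{eq:sym-gen-sp}, and here the single genuinely new ingredient enters: the exponential formula for species, which says that the exponential generating function of $\calS(\frakg)$ is the exponential of that of $\frakg$. Tracking the number of factors with the variable $s$ gives
\[
\sum_{k,m\geq 0}\dim\calS^k(\frakg)[I]\,s^k\,\frac{t^m}{m!}
=\exp\Bigl(s\sum_{i\geq 1}\dim\frakg[i]\,\frac{t^i}{i!}\Bigr).
\]
Setting $s=1$ and using $\tH[I]\cong\calS(\frakg)[I]$ identifies the inner sum with $\log\egfh(t)$, so the right-hand side equals $\egfh(t)^{s}$, which is~\eqref{eq:sym-gen-sp}. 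I expect the real obstacle to lie not in this bookkeeping but in correctly marshalling the species versions of CMM, PBW, and the Eulerian idempotents, and in verifying that duality and the associated-graded construction interact with the Adams operators exactly as in the graded case; once those structural inputs are granted, the computation is routine.
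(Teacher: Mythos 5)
Your proposal is correct and follows essentially the same route as the paper: reduce via $\gr$, duality, and the species versions of CMM and PBW to the free commutative monoid $\calS(\tP)$ on the primitives, observe that $\Psi_n$ acts by $n^k$ on the length-$k$ part, and read off the multiplicities from $\calS(\tP)=\wE\circ\tP$, which gives $\egfh(t)^s=\exp\bigl(s\,\egfp(t)\bigr)$. The paper compresses the reduction into a single sentence referring back to Theorem~\ref{t:main} and computes the action of $\Psi_n$ on $\calS(\tP)$ directly rather than via the Eulerian idempotents, but these are presentational differences only.
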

\begin{proof}
 As in the proof of Theorem~\ref{t:main}, a combination of Lemma~\ref{l:foissy-sp} with the
PBW and CMM theorems (for Hopf monoids in species~\cite[\S 15]{AguMah:2013})
shows that we can assume that
\[
\tH = \calS(\tP)
\]
for a certain positive species $\tP$. Here, $\calS(\tP)$ is 
the free commutative monoid on $\tP$ with its canonical Hopf monoid structure~\cite[\S 7]{AguMah:2013}. 

Let $\egfp(t)$ be the exponential generating function for the dimension sequence of $\tP$.
Since $\calS(\tP)= \wE\circ\tP$,
where $\wE$ is the exponential species, we have
\[
\egfh(t) = \exp\bigl(\egfp(t)\bigr) \qand \egfh(t)^s = \exp\bigl(s\egfp(t)\bigr).
\]

On the other hand, a direct calculation of the Adams operators on
$\calS(\tP)$ shows that the characteristic polynomial is as in~\eqref{eq:char poly-sp}
and that the integers $\expmul(k,m)$ are determined by
\[
\sum_{k,m\geq 0} \expmul(k,m)\, s^k \frac{t^m}{m!} = \exp\bigl(s\egfp(t)\bigr).
\]
The result follows.
\end{proof}

\subsection{Trace of the antipode}\label{ss:trace-sp}
Let
\[
\egfa(t) := 1+ \sum_{m\geq 1} \tr\bigl(\apode\big\vert_{\tH[m]}\bigr)\, \frac{t^m}{m!}
\]
denote the exponential generating function for the trace of the antipode of a Hopf monoid $\tH$. This is none other than the reciprocal of the exponential generating function for the dimension sequence.

\begin{corollary}\label{c:trace-antipode-gen-sp}
\begin{equation}\label{eq:trace-antipode-gen-sp}
\egfa(t) = \frac{1}{\egfh(t)}.
\end{equation}
\end{corollary}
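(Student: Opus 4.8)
The plan is to deduce Corollary~\ref{c:trace-antipode-gen-sp} as the specialization $n=-1$ of the general characteristic polynomial formula for Hopf monoids, exactly mirroring the passage from Theorem~\ref{t:main} to Corollary~\ref{c:trace-antipode-gen} in the graded setting. First I would record the trace analogue of Corollary~\ref{c:trace-power}: taking the trace of $\Psi_n\big\vert_{\tH[I]}$ and reading off eigenvalues from~\eqref{eq:char poly-sp}, one gets
\[
\tr\bigl(\Psi_n\big\vert_{\tH[m]}\bigr) = \sum_{k=0}^m n^k\,\expmul(k,m),
\]
where $m=\abs{I}$ (the trace depends only on the cardinality, since the $\expmul(k,m)$ do). Packaging these traces into the exponential generating function and invoking~\eqref{eq:sym-gen-sp} with $s$ replaced by the eigenvalue parameter $n$, I would obtain
\[
\sum_{m\geq 0} \tr\bigl(\Psi_n\big\vert_{\tH[m]}\bigr)\,\frac{t^m}{m!} = \egfh(t)^n.
\]
This is the species-level analogue of Corollary~\ref{c:trace-power-gen}, and it is the key intermediate identity.

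The corollary then follows by setting $n=-1$. Since $\Psi_{-1}=\apode$, the left-hand side becomes $\egfa(t)$, while the right-hand side becomes $\egfh(t)^{-1}$, giving exactly~\eqref{eq:trace-antipode-gen-sp}. The role played in the graded case by the factorization $1-t^k=\prod_{\omega\in\mu_k}(1-\omega t)$ and the relation $h_{n^2}(t^2)=h_n(t)h_{-n}(t)$ is here subsumed entirely into the cleaner exponential form $\egfh(t)^n$: the reciprocal relationship is immediate once the exponent $n$ is exhibited, with no root-of-unity bookkeeping needed.

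The main obstacle is justifying the trace-generating-function identity $\sum_m \tr(\Psi_n\big\vert_{\tH[m]})\,t^m/m! = \egfh(t)^s\big\vert_{s=n}$. The subtlety is that~\eqref{eq:sym-gen-sp} is a \emph{bivariate} generating function in $s$ and $t$, where $s$ is the sampling/length parameter recording $\expmul(k,m)$, whereas in the trace we substitute the \emph{eigenvalue} $n^k$, i.e.\ $s=n$. One must check that the substitution $s\mapsto n$ is precisely what converts $\sum_k \expmul(k,m)s^k$ into $\sum_k n^k\expmul(k,m) = \tr(\Psi_n\big\vert_{\tH[m]})$; this is formally the assertion that evaluating the $s$-variable at $n$ reads off the trace coefficientwise in $t^m/m!$. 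Since both sides are power series in $t$ with the same $t^m$-coefficients after the substitution, this is a routine comparison, but it is the step where the two distinct meanings of the auxiliary variable must be reconciled. As in Theorem~\ref{t:main-sp}, the underlying structural input—reducing to $\tH=\calS(\tP)$ via Lemma~\ref{l:foissy-sp}, CMM, and PBW—has already been established, so no further Hopf-theoretic work is required beyond this bookkeeping.
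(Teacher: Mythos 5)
Your proposal is correct and follows essentially the same route as the paper: the paper's proof of Corollary~\ref{c:trace-antipode-gen-sp} is exactly the specialization $n=-1$ in~\eqref{eq:char poly-sp} (to read off the antipode trace as $\sum_k(-1)^k\expmul(k,m)$) combined with $s=-1$ in~\eqref{eq:sym-gen-sp}, which is the case $n=-1$ of your intermediate identity $\sum_m\tr\bigl(\Psi_n\big\vert_{\tH[m]}\bigr)\,t^m/m!=\egfh(t)^n$. Your extra care about reconciling the sampling variable $s$ with the eigenvalue $n^k$ is exactly the (routine) point the paper's one-line proof relies on, so there is no gap.
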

\begin{proof}
This follows from Theorem~\ref{t:main-sp}, taking $n=-1$ in~\eqref{eq:char poly-sp}
and $s=-1$ in~\eqref{eq:sym-gen-sp}.
\end{proof}

\begin{example}\label{eg:sigma}
Let $\tH=\tSig$ be the Hopf monoid of set compositions~\cite[\S 11.1]{AguMah:2013}.
We have
\[
\egfh(t) = \frac{1}{2-\exp(t)}.
\]
Therefore, $\egfa(t) = 2-\exp(t) = 1-\sum_{m\geq 1}\frac{t^m}{m!}$, and we obtain
\begin{equation}\label{eq:sigma}
\tr\bigl(\apode\big\vert_{\tH[m]}\bigr) = -1
\end{equation}
for all $m\geq 1$. This result can also be obtained by a direct calculation, starting from
either of the expressions for the antipode of $\tSig$ given in~\cite[Prop. 59 or Thm. 60]{AguMah:2013}.
\end{example}

For an extension of this result, assume that there is a positive species $\tP$ such that 
\[
\tH \cong \wL\circ\tP
\] 
as species, where $\wL$ is the species of linear orders. Not every Hopf monoid $\tH$ is of this form, but this is the case if $\tH$
is free or cofree~\cite[\S 6]{AguMah:2013}. In this situation, we have
\begin{equation}\label{eq:S trace-sp}
\tr\bigl(\apode\big\vert_{\tH[m]}\bigr) = - \dim \tP[m].
\end{equation}
Note that if $\tP=\wE$, then $\tH\cong\tSig$, and~\eqref{eq:S trace-sp} recovers~\eqref{eq:sigma}.

\begin{example}\label{eg:pi}
Let $\tH=\tPi$ be the Hopf monoid of set partitions~\cite[\S 9.3]{AguMah:2013}.
We have
\[
\egfh(t) = \exp\bigl(\exp(t)-1\bigr).
\]
Therefore,  
\[
\egfa(t) = \exp\bigl(1-\exp(t)\bigr) = 1-t+\frac{t^3}{3!}+\frac{t^4}{4!}
-2 \frac{t^5}{5!}-9 \frac{t^6}{6!}-9 \frac{t^7}{7!}+50 \frac{t^8}{8!}+ 267 \frac{t^9}{9!}+\cdots.
\]
It follows that 
\begin{equation}\label{eq:pi}
\tr\bigl(\apode\big\vert_{\tH[m]}\bigr) = \Pi_e(m) - \Pi_o(m),
\end{equation}
where $\Pi_e(m)$  and $\Pi_o(m)$ denote the number of set partitions of $[m]$ into an even and an odd number of blocks, respectively.
This result can also be obtained by a direct calculation, starting from
either of the expressions for the antipode of $\tPi$ given in~\cite[Thm. 33 or Prop. 35]{AguMah:2013}.
\end{example}

\begin{remark}
The Hopf monoid $\tPi$ is in many ways parallel to the Hopf algebra $\sym$ of symmetric functions
(Section~\ref{ss:sym}). 
A consequence of the preceding calculation, however, is that $\tPi$ does not admit a linear
basis that behaves under the antipode in the same manner as the Schur basis of $\sym$. More precisely, there is no basis 
$\{s_{\pi} \mid \pi\vdash [m]\}$ of the space $\tPi[m]$ with the property that
\[
\apode(s_{\pi}) = (-1)^m\, s_{\pi'},
\]
for some map $\pi\to\pi'$ on the set of set partitions of $[m]$. Indeed, if this were the case,
the sequence of antipode traces would alternate in sign.
\end{remark}

For an extension of the calculation in Example~\ref{eg:pi}, let $\tH$ be a Hopf monoid and $\tP$ a positive species such that
\[
\tH \cong \wE\circ\tP
\] 
as species. (The first part of the proof of Theorem~\ref{t:main-sp} shows that every connected Hopf monoid is of this form.) In this situation, an $\tH$-structure on a finite set $I$ is an \emph{assembly} of $\tP$-structures, in the sense of~\cite[\S 1.4]{BerLabLer:1998}. Conversely, one may regard a $\tP$-structure as a \emph{connected} $\tH$-structure. We then have
\begin{equation}\label{eq:trace-antipode-sp}
	\tr\bigl(\apode\big\vert_{\tH[m]}\bigr) = h_e(m) - h_o(m),
\end{equation}
where $h_e(m)$ and $h_o(m)$ denote the number of $\tH$-structures with an even and odd number of connected components, respectively. 
If $\tP=\wE$, then \eqref{eq:trace-antipode-sp} recovers \eqref{eq:pi}.

The combination of~\eqref{eq:trace-antipode-gen-sp} and~\eqref{eq:trace-antipode-sp} provide a semicombinatorial description for the reciprocal of any power series arising as the exponential generating function for the dimension sequence of a connected Hopf monoid in species.

\section{$q$-{H}opf algebras}\label{s:q-Hopf}

Fix a scalar $q$. A $q$-Hopf algebra is a Hopf monoid in the lax braided monoidal
category of graded vector spaces, with lax braiding $V\otimes W \to W\otimes V$
given by
\[
x\otimes y\mapsto q^{mn}\, y\otimes x,
\]
where $x\in V$ and $y\in W$ are homogeneous elements of degrees $m$ and $n$.
If $q=1$, a $q$-Hopf algebra is just a graded Hopf algebra as in Section~\ref{ss:graded}.
For information on $q$-Hopf algebras, see~\cite[\S 2.3]{AguMah:2010}.
 
In this section we discuss extensions of some of the main results
from earlier sections to the context of connected $q$-Hopf algebras. 

\subsection{Cofreeness for connected $q$-Hopf algebras}\label{ss:cofree-q}

Let $V$ be a graded vector space. When $x\in V_n$, we write $\abs{x}=n$.
The deconcatenation coalgebra on $V$ (Section~\ref{ss:cofree}),
endowed with the \demph{$q$-shuffle product}, is a connected $q$-Hopf algebra. The $q$-shuffle product of two homogeneous tensors $x_1\ldots x_i$ and $x_{i+1}\ldots x_k$
is the following element of $\cT(V)$:
\[
\sum_{\sigma} q^{\inv_x(\sigma)}\,x_{\sigma^{-1}(1)}\ldots x_{\sigma^{-1}(k)},
\]
where the sum is over all permutations $\sigma\in S_k$ such that
\[
\sigma(1)<\cdots<\sigma(i) \qand \sigma(i+1)<\cdots<\sigma(k),
\]
and 
\[
\inv_x(\sigma) := \sum_{\substack{a<b\\ \sigma(a)>\sigma(b)}} \abs{x_a}\abs{x_b}.
\]
We denote the resulting $q$-Hopf algebra by $\cT_q(V)$.
The antipode is given by
\begin{equation}\label{eq:ant-shuffle-q}
\apode(x_1x_2\dotsb x_k) = (-1)^k q^{\inv_x(k)}\,x_k\dotsb x_2x_1,
\end{equation}
where
\[
\inv_x(k) := \sum_{a<b} \abs{x_a}\abs{x_b}.
\]

The following is an extension of Lemma~\ref{l:as05}. The proof of the latter result given in \cite[Props. 1.4 \& 1.5]{AguSot:2005} yields its extension as well.

\begin{lemma}\label{l:as05-q}
Let $H$ be a connected $q$-Hopf algebra that is cofree as a graded coalgebra. 
Then 
\[
\gr H \cong \cT_q(V)
\] 
as $q$-Hopf algebras, where $V=\calP(H)$.
\end{lemma}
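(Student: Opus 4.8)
The plan is to carry the proof of Lemma~\ref{l:as05} over to the braided setting, threading the $q$-braiding through each step; as the authors note, the argument of~\cite[Props.~1.4 \& 1.5]{AguSot:2005} is organized so that this substitution causes no essential difficulty. The first observation is that cofreeness is a purely coalgebraic condition and is therefore insensitive to $q$. Since $H$ is cofree as a graded coalgebra, its coradical filtration satisfies $H^{(m)}\cong\bigoplus_{k=0}^{m}V^{\otimes k}$ with $V=\calP(H)$; consequently $\gr H\cong\cT(V)$ as graded coalgebras, the coproduct being plain deconcatenation, and $\calP(\gr H)\cong\calP(H)=V$. This is identical to the $q=1$ case and requires no new input.

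The substance of the proof is the identification of the product of $\gr H$ with the $q$-shuffle product. I would use the universal property of the cofree coalgebra $\cT(V)$: a morphism of conilpotent coalgebras into $\cT(V)$ is determined by its composite with the projection $\pi\colon\cT(V)\to V$ onto the cogenerators. The multiplication $m\colon\gr H\otimes\gr H\to\gr H$ is a morphism of coalgebras, where the domain carries the \emph{braided} tensor-square coalgebra structure built from the $q$-braiding $c\colon x\otimes y\mapsto q^{\abs{x}\abs{y}}\,y\otimes x$; thus $m$ is pinned down by $\pi\circ m$. Two facts compute this composite. First, unitality fixes $\pi\circ m$ on $V\otimes\kk$ and on $\kk\otimes V$. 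Second, because $\gr H$ is an associated graded object, its product respects the length grading, sending $V^{\otimes a}\otimes V^{\otimes b}$ into $V^{\otimes(a+b)}$; hence for $a,b\geq 1$ the image has no cogenerator component and $\pi\circ m$ vanishes on $(\gr H)_+\otimes(\gr H)_+$. The same two properties hold for the $q$-shuffle product, which is likewise a coalgebra morphism from the braided tensor square into the deconcatenation coalgebra. Since the two products agree after applying $\pi$, the universal property forces them to coincide. A bialgebra isomorphism between $q$-Hopf algebras automatically intertwines the unique antipodes, so this yields $\gr H\cong\cT_q(V)$ as $q$-Hopf algebras.

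The step I expect to require the most care is the bookkeeping of the braiding. One must verify that reconstructing $m$ from $\pi\circ m$ through the braided coalgebra structure produces exactly the inversion weight $q^{\inv_x(\sigma)}$ of Section~\ref{ss:cofree-q}, so that the forced product is the $q$-shuffle and not some other $q$-deformation. Concretely, each transposition incurred while merging two tensors should contribute a single factor $q^{\abs{x_a}\abs{x_b}}$, and these must accumulate into precisely $\inv_x(\sigma)$. Once the $q=1$ argument is phrased functorially inside the braided monoidal category of graded vector spaces, this accounting is forced, but it is the sole place where $q$ genuinely enters and hence the only point needing scrutiny. I note that, in contrast to the general Lemma~\ref{l:foissy}, no braided-commutativity hypothesis on $\gr H$ is invoked: cofreeness, via the length grading of the associated graded, already supplies the rigidity that determines the product.
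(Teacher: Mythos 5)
Your proof is correct and takes essentially the same route as the paper, which establishes Lemma~\ref{l:as05-q} simply by observing that the proof of Lemma~\ref{l:as05} given by Aguiar and Sottile carries over to the braided setting; that proof is exactly your rigidity argument, namely that a counital coalgebra morphism from the (braided) tensor square into the cofree coalgebra $\cT(V)$ is determined by its corestriction to the cogenerators, which unitality together with preservation of the length (coradical) grading forces to agree for the product of $\gr H$ and for the $q$-shuffle product. The one point you flag as delicate---recovering the weights $q^{\inv_x(\sigma)}$---is in fact absorbed by the uniqueness statement, since the paper already records that $\cT_q(V)$ is a $q$-bialgebra whose product is unital and length-preserving.
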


\subsection{Characteristic polynomial and trace of the antipode}\label{ss:char-cofree-q}

Let $H$ be a connected $q$-Hopf algebra that is cofree as a graded coalgebra. 
We lay the groundwork for a description of the characteristic polynomial of such a Hopf algebra.
Let a weighted alphabet be given,
with $v_n$ letters of weight $n$, as in Section~\ref{ss:pal-lyn}. 
The \demph{multiweight} of a word is the sequence
of letter weights. If the word has weight $m$, its multiweight is a composition of 
$m$, and we write $\alpha\vDash m$.  

Given a composition $\alpha$, let
\[
\pal(\alpha) \qand \nopal(\alpha)
\]
denote the number of palindromes and nonpalindromes, respectively, of multiweight $\alpha$. If
$\alpha=(a_1,\ldots,a_k)$, then
\[
\pal(\alpha) = \begin{cases}
\prod_{i=1}^{\lceil k/2 \rceil} v_{a_i} & \text{ if $\alpha=\rev\alpha$,} \\
0         & \text{ otherwise, }
\end{cases}
\qqand
\nopal(\alpha) = \Bigl(\prod_{i=1}^{k} v_{a_i}\Bigr)  - \pal(\alpha).
\]
(Recall that $\rev\alpha=(a_k,\ldots,a_1)$ denotes the reversal of $\alpha$.)
Let $\ell(\alpha)$ denote the length $k$ of $\alpha$, and let
\[
\inv(\alpha) := \sum_{1\leq i<j\leq k} a_i a_j.
\]

Let $H$ be as above. Fix a homogeneous basis of $V=\calP(H)$, 
and take it as our alphabet. Thus, $v_n = \dim V_n$.

\begin{theorem}\label{t:pal-q} 
For each nonnegative integer $m$, the characteristic polynomial of the antipode is
\begin{equation}\label{eq:S pal-q}
 \chi\left(\apode\big\vert_{H_m}\right)(x) = 
  	\prod_{\alpha\,\vDash\, m}\left(x-(-1)^{\ell(\alpha)}q^{\inv(\alpha)}\right)^{\pal(\alpha)} \cdot 
	 \left(x^2-q^{2\,\inv(\alpha)}\right)^{\nopal(\alpha)/2}.
\end{equation}
\end{theorem}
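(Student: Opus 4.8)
The plan is to follow the template of the proof of Theorem~\ref{t:pal}, inserting the $q$-twist carried by the antipode formula~\eqref{eq:ant-shuffle-q}. First I would reduce to the model $q$-Hopf algebra $\cT_q(V)$. Since the structure maps, and hence the antipode, preserve the coradical filtration, repeated application of Lemma~\ref{l:eigenvalues}(i) along the filtration $(H^{(0)})_m\subseteq\cdots\subseteq(H^{(m)})_m=H_m$ shows that $\chi(\apode\vert_{H_m})=\chi(\apode\vert_{(\gr H)_m})$, exactly as in the proof of Theorem~\ref{t:main}. By Lemma~\ref{l:as05-q} we have $\gr H\cong\cT_q(V)$ as $q$-Hopf algebras with $V=\calP(H)$, so it suffices to compute the characteristic polynomial of the antipode of $\cT_q(V)$ on its degree-$m$ component.

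Next I would fix a homogeneous basis of $V$ as alphabet and analyze the action of $\apode$ on the resulting basis of words. For a word $w=x_1\cdots x_k$ of multiweight $\alpha=(\abs{x_1},\ldots,\abs{x_k})\vDash m$, formula~\eqref{eq:ant-shuffle-q} reads $\apode(w)=(-1)^{\ell(\alpha)}q^{\inv(\alpha)}\,\rev{w}$, since $\inv_x(k)=\sum_{a<b}\abs{x_a}\abs{x_b}=\inv(\alpha)$ and $\rev{w}=x_k\cdots x_1$ is the reversed word. The key symmetry to record is that $\inv(\alpha)=\tfrac12\bigl(m^2-\sum_i a_i^2\bigr)$ is a symmetric function of the parts, hence invariant under reversal: $\inv(\rev\alpha)=\inv(\alpha)$, and likewise $\ell(\rev\alpha)=\ell(\alpha)$. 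Thus $\apode$ preserves the span of the words whose multiweight lies in $\{\alpha,\rev\alpha\}$.

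Then I would split the word basis according to the reversal involution. A palindromic word ($w=\rev{w}$, necessarily of palindromic multiweight $\alpha=\rev\alpha$) is an eigenvector of $\apode$ with eigenvalue $(-1)^{\ell(\alpha)}q^{\inv(\alpha)}$; ranging over such words and multiweights produces the first factor of~\eqref{eq:S pal-q}, with multiplicity $\pal(\alpha)$. A non-palindromic word $w$ pairs with its distinct reversal $\rev{w}$, and on the two-dimensional span of $\{w,\rev{w}\}$ the antipode acts by the matrix $(-1)^{\ell(\alpha)}q^{\inv(\alpha)}\left(\begin{smallmatrix}0&1\\1&0\end{smallmatrix}\right)$, whose characteristic polynomial is $x^2-q^{2\inv(\alpha)}$.

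The one point requiring care—and the main bookkeeping obstacle—is apportioning these $2\times 2$ blocks across the product over all compositions $\alpha\vDash m$. When $\alpha=\rev\alpha$ the block lies inside the multiweight-$\alpha$ component, and there are $\nopal(\alpha)/2$ of them, matching the exponent in~\eqref{eq:S pal-q} directly. When $\alpha\neq\rev\alpha$ the block straddles the $\alpha$- and $\rev\alpha$-components; there are $\nopal(\alpha)=\nopal(\rev\alpha)$ such blocks in all, but because $\inv(\alpha)=\inv(\rev\alpha)$ the two factors $(x^2-q^{2\inv(\alpha)})^{\nopal(\alpha)/2}$ and $(x^2-q^{2\inv(\rev\alpha)})^{\nopal(\rev\alpha)/2}$ combine to give exactly $(x^2-q^{2\inv(\alpha)})^{\nopal(\alpha)}$, the correct total contribution. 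Collecting the eigenvector and block contributions over all $\alpha\vDash m$ then yields~\eqref{eq:S pal-q}.
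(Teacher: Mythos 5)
Your proposal is correct and follows essentially the same route as the paper: reduce to $\cT_q(V)$ via Lemma~\ref{l:as05-q}, then read off eigenvectors (palindromic words) and $2\times 2$ swap blocks (reversal pairs) from formula~\eqref{eq:ant-shuffle-q}, exactly as the paper does by referring back to the proof of Theorem~\ref{t:pal}. Your explicit verification that $\inv(\rev\alpha)=\inv(\alpha)$ and the accounting of how the half-integer exponents $\nopal(\alpha)/2$ for $\alpha\neq\rev\alpha$ recombine across the $\alpha$- and $\rev\alpha$-factors is a worthwhile detail that the paper leaves implicit.
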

\begin{proof}
Lemma~\ref{l:as05-q} allows us to assume that $H=\cT_q(V)$. The result follows
from~\eqref{eq:ant-shuffle-q}, as in the proof of Theorem \ref{t:pal}. 
\end{proof}

In particular, the eigenvalues of the antipode of such a $q$-Hopf algebra
are positive or negative powers of $q$. We record the resulting expression for the trace.

\begin{corollary}\label{c:pal-q}
\begin{equation}\label{eq:S trace-q}
 \tr\left(\apode\big\vert_{H_m}\right) = 
  	\sum_{\alpha\,\vDash\, m} (-1)^{\ell(\alpha)} \pal(\alpha)\,q^{\inv(\alpha)}.
\end{equation}
\end{corollary}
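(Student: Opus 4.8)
The plan is to read the trace directly off the characteristic polynomial already established in Theorem~\ref{t:pal-q}, exactly as Corollary~\ref{c:pal} is deduced from Theorem~\ref{t:pal}. The key fact is that $\tr\bigl(\apode\big\vert_{H_m}\bigr)$ equals the sum of the eigenvalues of $\apode\big\vert_{H_m}$, counted with multiplicity (equivalently, the negative of the coefficient of $x^{d-1}$ in the monic characteristic polynomial, where $d=\dim H_m$). So the entire argument amounts to extracting this sum from the factorization~\eqref{eq:S pal-q}.

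First I would separate the two families of factors appearing in~\eqref{eq:S pal-q}, indexed by compositions $\alpha\vDash m$. The palindromic factors $\bigl(x-(-1)^{\ell(\alpha)}q^{\inv(\alpha)}\bigr)^{\pal(\alpha)}$ contribute the eigenvalue $(-1)^{\ell(\alpha)}q^{\inv(\alpha)}$ with multiplicity $\pal(\alpha)$, hence contribute the summand $(-1)^{\ell(\alpha)}\pal(\alpha)\,q^{\inv(\alpha)}$ to the trace. The non-palindromic factors $\bigl(x^2-q^{2\,\inv(\alpha)}\bigr)^{\nopal(\alpha)/2}$ have roots $\pm q^{\inv(\alpha)}$, each occurring with multiplicity $\nopal(\alpha)/2$; since $q^{\inv(\alpha)}+(-q^{\inv(\alpha)})=0$, these factors contribute nothing. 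Summing over all $\alpha\vDash m$ then yields~\eqref{eq:S trace-q}.

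The only point requiring any attention is the vanishing of the non-palindromic contributions, which merely reflects the fact, already noted in the proof of Theorem~\ref{t:pal}, that non-palindromic words pair with their reversals into $2\times2$ blocks of trace zero. I therefore expect no serious obstacle: the statement is a routine bookkeeping consequence of Theorem~\ref{t:pal-q}, and the proof will be short.

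As a self-contained alternative that bypasses the characteristic polynomial, one could instead compute the trace directly on the basis of tensors furnished by Lemma~\ref{l:as05-q}. By~\eqref{eq:ant-shuffle-q}, the antipode sends $x_1\cdots x_k$ to $(-1)^k q^{\inv_x(k)}\,x_k\cdots x_1$, so a basis tensor contributes to the trace precisely when it is a palindrome, with diagonal entry $(-1)^k q^{\inv_x(k)}$. Grouping palindromes according to their multiweight $\alpha$ collects the entry $(-1)^{\ell(\alpha)}q^{\inv(\alpha)}$ with the count $\pal(\alpha)$, recovering~\eqref{eq:S trace-q} once more; this is the semicombinatorial counterpart available under the cofreeness hypothesis.
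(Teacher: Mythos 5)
Your proposal is correct and matches the paper's (implicit) argument: the corollary is stated there as an immediate record of Theorem~\ref{t:pal-q}, with the trace read off the factorization~\eqref{eq:S pal-q} exactly as you do, the non-palindromic factors cancelling in pairs. Your alternative direct computation on the tensor basis via~\eqref{eq:ant-shuffle-q} is also sound, but it is just the $q$-analogue of the same bookkeeping.
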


We recover~\eqref{eq:S pal} and~\eqref{eq:S trace} as the case $q=1$ of~\eqref{eq:S pal-q} and~\eqref{eq:S trace-q}. 

\begin{example}\label{eg:geometric-q}
Let $V$ be an $r$-dimensional vector space. View it as a graded vector space
concentrated in degree $1$ and consider the $q$-Hopf algebra $\cT_q(V)$.
Then
\[
\pal(\alpha) = \begin{cases}
r^{\lceil m/2 \rceil} & \text{ if $\alpha=(1^m)$,} \\
0 & \text{ otherwise.}
\end{cases}
\]
Therefore,
\[
\tr\bigl(\apode\big\vert_{H_m}\bigr)= 
(-1)^m r^{\lceil m/2 \rceil} q^{\binom{m}{2}}.
\]
This generalizes the conclusion of Example~\ref{eg:geometric}.
\end{example}

\subsection{Generating functions}\label{ss:genfun-q}

We continue to assume that $H$ is a connected $q$-Hopf algebra that is cofree as a graded coalgebra, and $V=\calP(H)$. We also assume that $q\neq 0$.

Let
\[
v_q(t) := \sum_{n\geq 1} v_n \frac{t^n}{q^{\binom{n}{2}}}.
\]
All generating functions in this section will be of this form.

For each pair of nonnegative integers $k$ and $m$, let
\[
\pal_q(k,m) := \sum_{\substack{\alpha\,\vDash\, m\\ \ell(\alpha)=k}} \pal(\alpha)\,q^{\inv(\alpha)}.
\]
Then~\eqref{eq:S trace-q} may be rewritten as
\begin{equation}\label{eq:S trace-q-2}
 \tr\left(\apode\big\vert_{H_m}\right) = 
  	\sum_{k=0}^m (-1)^{k} \pal_q(k,m).
\end{equation}

We have the following $q$-generating functions for even and odd palindromes,
generalizing Proposition~\ref{p:genfun-pal}.
 
\begin{proposition}\label{p:genfun-pal-q}
\begin{align}
\label{eq:genfun-epal-q}
\sum_{k,m\geq 0} \pal_q(2k,m)\,s^k \frac{t^m}{q^{\binom{m}{2}}} & = \frac{1}{1-s v_{q^2}(t^2)},\\
\label{eq:genfun-opal-q}
\sum_{k,m\geq 0} \pal_q(2k+1,m)\,s^k \frac{t^m}{q^{\binom{m}{2}}} & = \frac{v_q(t)}{1-s v_{q^2}(t^2)}.
\end{align}
\end{proposition}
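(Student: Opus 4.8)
The plan is to adapt the peel-off recursion behind Proposition~\ref{p:genfun-pal}, carrying the $q$-statistic $\inv$ along at each step. Given a palindromic composition $\alpha=(a_1,\dots,a_K)\vDash m$, its outer parts coincide, say $a_1=a_K=n$, and deleting them leaves a palindrome $\alpha'\vDash m-2n$ of length $K-2$. First I would record how the two relevant quantities behave under this deletion. The palindrome count factors as $\pal(\alpha)=v_n\,\pal(\alpha')$, since the outer pair contributes exactly one factor $v_n$ to $\prod_{i=1}^{\lceil K/2\rceil} v_{a_i}$. Splitting $\inv(\alpha)=\sum_{i<j}a_ia_j$ according to whether $i=1$, or $j=K$ with $i\ge 2$, or $2\le i<j\le K-1$, gives
\[
\inv(\alpha)=n(m-n)+n(m-2n)+\inv(\alpha')=\inv(\alpha')+n(2m-3n).
\]
Both identities are insensitive to the parity of $K$, so a single computation serves \eqref{eq:genfun-epal-q} and \eqref{eq:genfun-opal-q} alike.

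Summing over the $v_n$ choices of outer letter then yields the recursion
\[
\pal_q(k,m)=\sum_{n\ge 1} v_n\,q^{\,n(2m-3n)}\,\pal_q(k-2,m-2n),
\]
valid for $k\ge 2$, with base values $\pal_q(0,m)=\delta_{0,m}$ and $\pal_q(1,m)=v_m$. The appearance of $m$ inside the exponent $q^{\,n(2m-3n)}$ blocks a direct passage to generating functions, and reconciling it is the crux of the argument---precisely the role of the normalization by $q^{\binom{m}{2}}$. I would compute
\[
\binom{m}{2}-\binom{m-2n}{2}=2nm-2n^2-n,
\]
and divide the recursion through by $q^{\binom{m}{2}}$. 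The $q$-power multiplying the normalized term $\pal_q(k-2,m-2n)/q^{\binom{m-2n}{2}}$ then collapses to
\[
q^{\,n(2m-3n)}\,q^{-(2nm-2n^2-n)}=q^{-n(n-1)}=q^{-2\binom{n}{2}},
\]
which no longer depends on $m$ and is exactly the coefficient occurring in $v_{q^2}(t^2)=\sum_{n\ge1}v_n\,q^{-2\binom{n}{2}}t^{2n}$.

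Once this cancellation is in place, the rest is formal. Writing $E(s,t)$ and $O(s,t)$ for the series on the left of \eqref{eq:genfun-epal-q} and \eqref{eq:genfun-opal-q}, each deletion step becomes multiplication by $s\,v_{q^2}(t^2)$, upon recognizing $t^m=t^{2n}\,t^{m-2n}$. The even series, whose base term $\pal_q(0,m)=\delta_{0,m}$ contributes $1$, therefore satisfies $E(s,t)-1=s\,v_{q^2}(t^2)\,E(s,t)$; the odd series, whose base term $\pal_q(1,m)=v_m$ contributes $v_q(t)$, satisfies $O(s,t)-v_q(t)=s\,v_{q^2}(t^2)\,O(s,t)$. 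Solving these two linear equations delivers \eqref{eq:genfun-epal-q} and \eqref{eq:genfun-opal-q}. I expect the only real obstacle to be the bookkeeping in the middle paragraph: verifying that the $\inv$-exponent and the difference of binomial coefficients conspire to eliminate $m$; with that confirmed, the proof runs in exact parallel to the $q=1$ case.
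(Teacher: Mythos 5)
Your proof is correct and follows essentially the same route as the paper's: peel off the matching outer letters of a palindromic multiweight, compute that $\inv$ drops by $n(2m-3n)$ while $\pal$ loses a factor $v_n$, and check that dividing by $q^{\binom{m}{2}}$ turns the $m$-dependent exponent into $q^{-2\binom{n}{2}}$, yielding a recursion solved by the stated generating functions. The only cosmetic difference is that you treat both parities with one deletion computation, whereas the paper writes out the even case and declares the odd case similar.
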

\begin{proof}
Consider a palindrome of even length $2k$ and weight $m>0$. Its multiweight $\alpha$
is a palindromic composition of $m$, necessarily of the form
\[
\alpha=(a,\beta,a),
\]
where $a$ is a positive integer and $\beta$ is a palindromic composition of $m-2a$. We have
\[
\inv(\alpha) = \inv(\beta) + a^2 + 2a(m-2a) 
\qand
\pal(\alpha) = v_a\,\pal(\beta).
\]
The former is equivalent to
\[
\inv(\alpha) - \binom{m}{2} = \inv(\beta) - \binom{m-2a}{2} - 2\,\binom{a}{2}.
\]
 Therefore,
\begin{multline*}
\frac{\pal_q(2k,m)}{q^{\binom{m}{2}}} = \sum_{\substack{\alpha\,\vDash\, m\\ \ell(\alpha)=k}} \pal(\alpha)\,q^{\inv(\alpha)-\binom{m}{2}}
= \sum_{a\geq 1} \sum_{\substack{\beta\,\vDash\, m-2a\\ \ell(\beta)=2k-2}} v_a\,\pal(\beta)\,
q^{\inv(\beta)-\binom{m-2a}{2}- 2\,\binom{a}{2}}\\
= \sum_{a\geq 1} \frac{v_a}{q^{2\,\binom{a}{2}}}\, \frac{\pal_q(2k-2,m-2a)}{q^{\binom{m-2a}{2}}}.
\end{multline*}
This recursion leads to~\eqref{eq:genfun-epal-q}. A similar argument establishes~\eqref{eq:genfun-opal-q}.
\end{proof}

We arrive at the following generalization of Corollary~\ref{c:genfun-trace}.

\begin{corollary}\label{c:genfun-trace-q}
\begin{equation}\label{eq:genfun-trace-q}
\sum_{m\geq 0} \tr\bigl(\apode\big\vert_{H_m}\bigr)\, \frac{t^m}{q^{\binom{m}{2}}} = \frac{1-v_q(t)}{1-v_{q^2}(t^2)}.
\end{equation}
\end{corollary}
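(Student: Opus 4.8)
The plan is to imitate the proof of Corollary~\ref{c:genfun-trace} almost verbatim, now carrying along the $q$-weights; these have already been absorbed into the normalized series of Proposition~\ref{p:genfun-pal-q}. Crucially, both bivariate generating functions~\eqref{eq:genfun-epal-q} and~\eqref{eq:genfun-opal-q} are expressed with exactly the $t^m/q^{\binom{m}{2}}$ weighting that appears on the left-hand side of~\eqref{eq:genfun-trace-q}, so no further rescaling is required. First I would specialize $s=1$ in both identities. This turns their left-hand sides into $\sum_{k,m\geq 0}\pal_q(2k,m)\,t^m/q^{\binom{m}{2}}$ and $\sum_{k,m\geq 0}\pal_q(2k+1,m)\,t^m/q^{\binom{m}{2}}$, with right-hand sides $1/(1-v_{q^2}(t^2))$ and $v_q(t)/(1-v_{q^2}(t^2))$, respectively.

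Next I would subtract the odd series from the even series. On the right this produces exactly $(1-v_q(t))/(1-v_{q^2}(t^2))$, the target expression. On the left, the coefficient of $t^m/q^{\binom{m}{2}}$ becomes $\sum_{k}\pal_q(2k,m)-\sum_{k}\pal_q(2k+1,m)=\sum_{j=0}^m(-1)^j\pal_q(j,m)$. By~\eqref{eq:S trace-q-2} this alternating sum is precisely $\tr\bigl(\apode\big\vert_{H_m}\bigr)$, so the left-hand side collapses to $\sum_{m\geq 0}\tr\bigl(\apode\big\vert_{H_m}\bigr)\,t^m/q^{\binom{m}{2}}$, and the identity follows.

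I do not anticipate any genuine obstacle: the whole content of the corollary is packaged into Proposition~\ref{p:genfun-pal-q} together with the trace formula~\eqref{eq:S trace-q-2}, and the deduction is purely formal. The only point requiring a moment of care is confirming that the specialization $s=1$ is legitimate as an identity of formal power series in $t$. This holds because, for each fixed $m$, the length of a word of weight $m$ is at most $m$ (every letter has weight at least $1$), so $\pal_q(k,m)=0$ for $k>m$ and only finitely many $k$ contribute to each coefficient. With that observation in place, matching the $q^{\binom{m}{2}}$ normalization on both sides is immediate from the conventions set up just before the statement.
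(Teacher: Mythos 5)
Your proposal is correct and is essentially identical to the paper's proof: set $s=1$ in \eqref{eq:genfun-epal-q} and \eqref{eq:genfun-opal-q}, subtract, and invoke \eqref{eq:S trace-q-2}. The extra remark justifying the $s=1$ specialization (only finitely many lengths $k\leq m$ contribute to each coefficient) is a reasonable, if routine, addition.
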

\begin{proof}
This follows by subtracting~\eqref{eq:genfun-opal-q} from~\eqref{eq:genfun-epal-q}, letting $s=1$,
and employing~\eqref{eq:S trace-q-2}.
\end{proof}

\subsection{$q$-deformations}\label{ss:deformation}

The results in the Sections~\ref{ss:char-cofree-q} and~\ref{ss:genfun-q}
 apply only under the assumption of cofreeness. 
For $q$-Hopf algebras, this hypothesis is less restrictive than it may seem,
as we now argue.

Suppose our $q$-Hopf algebra is obtained by deforming the product
of an ordinary Hopf algebra, and leaving the unit and the coalgebra structure unchanged.
Thus, we have a family of products $\mu_q$ on a coalgebra $H$, turning it for each $q$
into a connected $q$-Hopf algebra, which we denote by $H(q)$.
Assume also that $\mu_q$ depends polynomially on $q$.
An example is $\cT_q(V)$, which is a deformation of $\cT(V)$.
More generally, the $q$-Hopf algebras constructed from Hopf monoids in species
by means of the functor $\cKc_{V,q}$, as in~\cite[\S 19.7]{AguMah:2010}, are all of this form.

In this situation, we may consider the $0$-Hopf algebra $H(0)$.
A result of Loday and Ronco~\cite[Thm. 2.6]{LodRon:2006} (see also~\cite[Thm. 2.13]{AguMah:2010}) guarantees that $H(0)$ is cofree as a graded coalgebra.
Since the coproduct has not been deformed, we have that our $q$-Hopf algebra $H(q)$ is cofree, for all $q$. 

In particular, the preceding results apply to  such Hopf algebra deformations.
By duality, they also apply in situations where the coproduct has been polynomially deformed
while the rest of the structure has been kept.

\subsection{$(-1)$-Hopf algebras}\label{ss:minusone}

The results of Section~\ref{s:main} relied on the PBW and CMM theorems for
graded connected Hopf algebras. While these results are not available for general
$q$-bialgebras, they are for $q=\pm 1$. In particular, the eigenvalues of the antipode
of a $(-1)$-Hopf algebra are still $\pm 1$, and the characteristic polynomials of the
Adams operators take the form~\eqref{eq:char poly}. 
(The multiplicities $\mul(k,m)$ are no longer given by~\eqref{eq:eul}.)


\bibliographystyle{plain}  
\bibliography{adams}

\end{document}